\newtheorem{lemma}{Lemma}[section]
\newtheorem{theorem}[lemma]{Theorem}
\newtheorem{example}[lemma]{Example}
\numberwithin{equation}{section}
\newcommand{\ud}{\,\mathrm{d}}
\newcommand{\RR}{\mathbb{R}}
\newcommand{\f}{\frac}
\newcommand{\xx}{|x|^2}
\newcommand{\yy}{|y|^2}
\newcommand{\xy}{\langle x,y\rangle}
\newcommand{\pp}[2]{\frac{\partial{#1}}{\partial{#2}}}
\newcommand{\ppp}[3]{\frac{\partial^2{#1}}{\partial{#2}\partial{#3}}}
\newcommand{\pppp}[4]%
  {\frac{\partial^3{#1}}{\partial{#2}\partial{#3}\partial{#4}}}
\newcommand{\p}{\phi}
\newcommand{\ps}{\phi(s)}
\newcommand{\pab}{\alpha\phi(\frac{\beta}{\alpha})}
\renewcommand{\a}{\alpha}
\renewcommand{\b}{\beta}
\newcommand{\ab}{(\alpha,\beta)}
\newcommand{\ta}{\tilde\alpha}
\newcommand{\tb}{\tilde\beta}
\newcommand{\ha}{\hat\alpha}
\newcommand{\hb}{\hat\beta}
\newcommand{\ba}{\bar\alpha}
\newcommand{\bb}{\bar\beta}
\newcommand{\aij}{a_{ij}}
\newcommand{\bi}{b_i}
\newcommand{\bj}{b_j}
\newcommand{\bk}{b_k}
\newcommand{\hbi}{\hat b_i}
\newcommand{\hbj}{\hat b_j}
\newcommand{\bbi}{\bar b_i}
\newcommand{\bbj}{\bar b_j}
\newcommand{\bij}{b_{i|j}}
\newcommand{\taij}{\tilde a_{ij}}
\newcommand{\tbij}{\tilde b_{i|j}}
\newcommand{\haij}{\hat a_{ij}}
\newcommand{\hbij}{\hat b_{i|j}}
\newcommand{\baij}{\bar a_{ij}}
\newcommand{\bbij}{\bar b_{i|j}}
\newcommand{\G}{G^i_\alpha}
\newcommand{\tG}{\tilde G^i_{\tilde\alpha}}
\newcommand{\hG}{\hat G^i_{\hat\alpha}}
\newcommand{\bG}{\bar G^i_{\bar\alpha}}
\newcommand{\rij}{r_{ij}}
\newcommand{\sij}{s_{ij}}
\newcommand{\ri}{r_i}
\newcommand{\si}{s_i}
\newcommand{\sio}{s^i{}_0}
\newcommand{\rj}{r_j}
\newcommand{\sj}{s_j}
\newcommand{\trij}{\tilde r_{ij}}
\newcommand{\hrij}{\hat r_{ij}}
\newcommand{\brij}{\bar r_{ij}}
\begin{document}
\title{On dually flat $\ab$-metrics
\footnotetext{\emph{Keywords}: Finsler metric, $\ab$-metric, dual flatness, information geometry, deformation.
\\
\emph{Mathematics Subject Classification}: 53B40, 53C60.}}
\author{Changtao Yu\footnote{supported by a NSFC grant(No.11026097)}}
\date{}
\maketitle

\begin{abstract}
The dual flatness for Riemannian metrics in information geometry has been extended to Finsler metrics. The aim of this paper is to study the dual flatness of the so-called $\ab$-metrics in Finsler geometry. By doing some special deformations, we will show that the dual flatness of an $\ab$-metric always arises from that of some Riemannian metric in dimensional $n\geq3$.
\end{abstract}

\section{Introduction}
Dual flatness is a basic notion in information geometry. It was first proposed by S.-I. Amari and H. Nagaoka when they study the information geometry on Riemannian spaces\cite{AN2}. Information geometry has emerged from investigating the geometrical structure of a family of probability distributions, and has been applied successfully to
various areas including statistical inference, control system theory and multiterminal information theory\cite{AN1,AN2}.

In 2007, Z. Shen extended the dual flatness in Finsler geometry\cite{szm-rfgw}. A Finsler metric $F$  on a manifold $M$ is said to be {\it locally dually flat} if at any point there is a local coordinate system $(x^i)$ in which $F=F(x,y)$ satisfies the following PDEs
\begin{eqnarray*}
[F^2]_{x^ky^l}y^k-2[F^2]_{x^l}=0.
\end{eqnarray*}
Such a coordinate system is said to be {\it adapted}.

For a Riemannian metric $\a=\sqrt{a_{ij}(x)y^iy^j}$, it is known that $\a$ is locally dually flat if and only if in an adapted coordinate system, the fundamental tensor of $\a$ is the Hessian of some local smooth function $\psi(x)$\cite{AN1,AN2}, i.e.,
\begin{eqnarray*}
a_{ij}(x)=\ppp{\psi}{x^i}{x^j}(x).
\end{eqnarray*}
The dual flatness of a Riemannian metric can also be described by its spray\cite{yct-odfr}: {\it $\a$ is locally dually flat if and only if its spray coefficients could be expressed in an adapted coordinate system} as
\begin{eqnarray}\label{duallyflat}
\G=2\theta y^i+\a^2\theta^i
\end{eqnarray}
{\it for some $1$-form $\xi:=\xi_i(x)y^i$}.

The first example of non-Riemannian dually flat Finsler metrics is the co-call {\it Funk metric}
\begin{eqnarray*}
F=\f{\sqrt{(1-\xx)\yy+\xy^2}}{1-\xx}+\f{\xy}{1-\xx}
\end{eqnarray*}
on the unit ball $\mathbb B^n(1)$\cite{csz-oldf}, which belongs to a special class of Finsler metrics named {\it Randers metrics}. Randers metrics are expressed as the sum of a Riemannian metric $\a=\sqrt{a_{ij}(x)y^iy^j}$ and an $1$-form $\b=b_i(x)y^i$ with the norm $b:=\|\b\|_\a<1$.

Based on the characterization result for locally dually flat Randers metrics given by X. Cheng et al.\cite{csz-oldf}, the author provide a more direct characterization and prove that the dual flatness of a Randers metric always arises from that of some Riemannian metric\cite{yct-odfr}: {\it A Randers metric $F=\a+\b$ is locally dually flat if and only if the Riemannian metric $\ba=\sqrt{1-b^2}\sqrt{\a^2-\b^2}$ is locally dually flat and the $1$-form $\bb=-(1-b^2)\b$ is dually related with respect to $\ba$}. In this case, $F$ can be reexpressed as
\begin{eqnarray}\label{NPE}
F=\f{\sqrt{(1-\bar b^2)\ba^2+\bb^2}}{1-\bar b^2}-\f{\bb}{1-\bar b^2}.
\end{eqnarray}

Recall that an $1$-form $\b$ is said to be {\it dually related} to a locally dually flat Riemannian metric $\a$ if in an adopted coordinate system the spray coefficients of $\a$ are in the form (\ref{duallyflat}) and the covariant derivation of $\b$ with respect to $\a$ are given by
\begin{eqnarray}\label{duallyrelated}
\bij=2\theta_i\bj+c(x)\aij
\end{eqnarray}
for some scalar function $c(x)$. This concept was first introduced by the author in \cite{yct-odfr}. In particular, we prove that the Riemannian metrics
\begin{eqnarray}\label{dfR}
\ba=\frac{\sqrt{(1+\mu|x|^2)|y|^2-\mu\langle x,y\rangle^2}}{(1+\mu|x|^2)^\frac{3}{4}}
\end{eqnarray}
are dually flat on the ball $\mathbb B^n(r_\mu)$ and the $1$-forms
\begin{eqnarray}\label{drb}
\bb=\f{\lambda\xy}{(1+\mu|x|^2)^\frac{5}{4}}
\end{eqnarray}
are dually related to $\ba$ for any constant number $\mu$ and $\lambda$, where the the radius $r_\mu$ is given by $r_\mu=\frac{1}{\sqrt{-\mu}}$ if
$\mu<0$ and $r_\mu=+\infty$ if $\mu\geq0$.

As a result, we construct many non-trivial dually flat Randers metrics as following:
\begin{eqnarray*}
F(x,y)&=&\f{\sqrt[4]{1+(\mu+\lambda^2)\xx}\sqrt{(1+\mu\xx)\yy-\mu\xy^2}}{1+\mu|x|^2}\nonumber\\
&&+\f{\lambda\xy}{(1+\mu\xx)\sqrt[4]{1+(\mu+\lambda^2)\xx}}.
\end{eqnarray*}
It is just the Funk metric when $\mu=-1$ and $\lambda=1$.

(\ref{NPE}) is just the {\it navigation expression} for Randers metrics, which play a key role in the research of Randers metrics. For example, D. Bao et al. classified Randers metrics with constant flag curvature \cite{brs}:~{\it $F=\a+\b$ is of constant flag curvature if and only if $\ba$ in (\ref{NPE}) is of constant sectional curvature and $\bb$ is homothetic to $\ba$, i.e.,}
$$\frac{1}{2}\left(\bar b_{i|j}+\bar b_{j|i}\right)=c\bar a_{ij}$$
{\it for some constant $c$.} Similarly, D. Bao et al. gave a characterization for Einstein metric of Randers type\cite{db-robl-onri}:~{\it $F=\a+\b$ is Einsteinian if and only if $\ba$ is Einsteinian and $\bb$ is homothetic to $\ba$}. It seems that most of the properties of Randers metrics become simple and clear if they are described with the navigation form\cite{huang}.

Except for Randers metrics, there is another important class of Finsler metrics defined also by a Riemannian metric and an $1$-form and given in the form
$$F=\pab,$$
where $\ps$ is a smooth function. Such kinds of Finsler metrics are called {\it $\ab$-metrics}. It was proposed by M. Matsumoto in 1972 as a direct generalization of Randers metrics. $\ab$-metrics form a special class of Finsler metrics partly because of its computability\cite{bcs}. Recently, many encouraging results about $\ab$-metrics, including flag curvature property\cite{lb-szm-onac,zhou}, Ricci curvature property\cite{cxy-szm-tyf,szm-yct-oesm} and projective property \cite{szm-opfa,yct-dhfp} etc., have been achieved.

2011, Q. Xia give a local characterization of locally dually flat $\ab$-metrics on a manifold with dimension $n\geq3$:
\begin{theorem}\label{maincf}\cite{xoldf}
Let $F=\pab$ be a Finsler metric on an open subset $U\subseteq\RR^n$ with $n\geq3$. Suppose $F$ is not of Riemannian type and $\phi'(0)\neq0$. Then $F$ is dually flat on $U$ if and only if the following conditions hold:
\begin{eqnarray}
&&\G=[2\theta+(3k_1-2)\tau\b]y^i+\a^2(\theta^i-\tau b^i)+\f{3}{2}k_3\tau\b^2 b^i,\label{Gi}\\
&&r_{00}=2\theta\b+(3\tau+2\tau b^2-2b_k\theta^k)\a^2+(3k_2-2-3k_3b^2)\tau\b^2,\label{rij}\\
&&s_{i0}=\b\theta_i-\theta\bi,\label{sij}\\
&&\tau\left\{s(k_2-k_3s^2)(\p\p'-s\p'^2-s\p\p'')-(\p'^2+\p\p'')+k_1\p(\p-s\p')\right\}=0,~~~~
\end{eqnarray}
where $\theta$ is an $1$-form, $\tau$ is a scalar function, and $k_1,k_2,k_3$ are constants.
\end{theorem}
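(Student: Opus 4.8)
The plan is to turn the dual flatness PDE into a system of tensor equations plus one ODE by separating the contributions of $\a$ and $\b$. Writing $L:=F^2=\a^2\p^2$ with $s=\b/\a$, a direct differentiation gives the convenient form
\[
[F^2]_{x^l}=\p(\p-s\p')\,[\a^2]_{x^l}+2\a\p\p'\,\b_{x^l},
\]
together with a similar but longer expression for $[F^2]_{x^ky^l}y^k$. First I would replace $[\a^2]_{x^l}$ and $\b_{x^l}$ by their covariant counterparts, using the Christoffel symbols of $\a$ (which reintroduce the spray $\G$) and the standard decomposition $\bij=\rij+\sij$ of the covariant derivative of $\b$ into its symmetric part $\rij$ and antisymmetric part $\sij$. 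Substituting into $[F^2]_{x^ky^l}y^k-2[F^2]_{x^l}=0$ then produces a single master identity, intrinsic in $\G,\rij,\sij,b^i,y^i$ and the function $\p(s)$, which must hold for all $y$.

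The core of the argument is a separation of variables. Regarding the master identity as a relation in $y$ at each fixed $x$, I would split it into its $\a$-rational and $\a$-irrational parts, according to the explicit powers of $\a$ appearing outside the argument $s$ of $\p$, and use that $\a=\sqrt{a_{ij}y^iy^j}$ is irrational in $y$ to conclude that both parts vanish separately. The irrational part carries the antisymmetric tensor $\sij$ and isolates the equation (\ref{sij}) for $s_{i0}$. The rational part, collected by powers of $\a^2$ and $\b$, breaks into a scalar piece controlling the quadratic form $r_{00}$ and a vector piece controlling $G^i$; here the hypothesis $n\geq3$ is used to decompose them into their irreducible tensorial types, namely $\aij$, $b_ib_j$ and $b_i\theta_j+b_j\theta_i$ for the former and the independent directions $y^i$ and $b^i$ for the latter, which is what forces the rigid shapes (\ref{Gi}) and (\ref{rij}) and brings in the $1$-form $\theta$ and the scalar $\tau$.

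It remains to explain the constants $k_1,k_2,k_3$ and the final ODE. The coefficients produced by the separation are a priori functions of $x$, whereas $\p$ and its derivatives are functions of $s$ alone; matching the two in the master identity ties an $x$-dependent combination to an $s$-dependent combination, and this can hold identically in $(x,y)$ only if the $x$-dependent combinations are constants. This pins the three scalars down to $k_1,k_2,k_3$ and leaves precisely the compatibility relation
\[
\tau\left\{s(k_2-k_3s^2)(\p\p'-s\p'^2-s\p\p'')-(\p'^2+\p\p'')+k_1\p(\p-s\p')\right\}=0
\]
on $\p$. Conversely, substituting (\ref{Gi})--(\ref{sij}) together with this ODE back into the dual flatness PDE, a direct computation confirms that the identity holds, which gives sufficiency.

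The step I expect to be the main obstacle is the bookkeeping in the separation: organizing the $\p$-weighted coefficients so that the three universal constants emerge cleanly, and proving rigorously that the a priori functions are genuinely constant. This is where the hypotheses $n\geq3$ and $\p'(0)\neq0$ are indispensable: the former supplies enough independent tensorial types to carry out the decomposition, while the latter, together with $F$ not being of Riemannian type, prevents the $\p$-combinations from degenerating, without which the separation would fail to determine the coefficients.
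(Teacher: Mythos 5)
You should note first that the paper contains no proof of this statement at all: Theorem \ref{maincf} is imported verbatim from Q.~Xia's paper \cite{xoldf}, and the present paper only uses it as a black box. So the comparison here is against Xia's published argument, not anything internal to this text. Measured against that, your reconstruction gets the overall architecture right: compute the master identity from $[F^2]_{x^ky^l}y^k-2[F^2]_{x^l}=0$ using $[F^2]_{x^l}=\phi(\phi-s\phi')[\alpha^2]_{x^l}+2\alpha\phi\phi'\beta_{x^l}$ (this formula is correct), rewrite everything covariantly through $G^i_\alpha$, $r_{ij}$, $s_{ij}$; decompose at each fixed point, with $n\geq3$ supplying enough independent directions to force the rigid shapes (\ref{Gi})--(\ref{sij}) and to introduce $\theta$ and $\tau$; then observe that the residual identity equates an $s$-dependent combination of $\phi,\phi',\phi''$ with $x$-dependent coefficients, so on the set where $\tau\neq0$ separation of variables pins $k_1,k_2,k_3$ down as constants, leaving the ODE with the factor $\tau$ in front; sufficiency follows by back-substitution. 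Your localization of where $n\geq3$ and $\phi'(0)\neq0$ enter is also essentially where they enter in Xia's proof.

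There is, however, one step in your plan that would fail as stated: the proposed lever for the first separation, namely splitting the master identity into ``$\alpha$-rational'' and ``$\alpha$-irrational'' parts and invoking the irrationality of $\alpha=\sqrt{a_{ij}y^iy^j}$ in $y$. That device is available for Randers metrics, where $\phi(s)=1+s$ makes $F^2$ a polynomial in $\alpha$ and $\beta$ (and it is exactly how \cite{csz-oldf} proceeds), but for a general smooth $\phi$ the quantities $\phi(\beta/\alpha)$, $\phi'(\beta/\alpha)$, $\phi''(\beta/\alpha)$ are neither rational nor of definite parity in $\alpha$, so collecting terms by explicit powers of $\alpha$ does not permit the conclusion that the two groups vanish separately. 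The working substitute, and what the actual proof does, is to fix $x$, choose an adapted frame with $b_i=b\delta_{i1}$, trade every $\beta$ for $s\alpha$, and then compare coefficients of independent monomials in the variables orthogonal to $b$ (equivalently, invoke linear-independence lemmas for the $\phi$-coefficient functions of $s$ against polynomials in $y$); it is the odd/even parity in those transverse variables, together with the free index, that detaches the antisymmetric equation (\ref{sij}) from the symmetric one (\ref{rij}) --- not the irrationality of $\alpha$. Two smaller points deserve explicit care: the constancy argument for $k_1,k_2,k_3$ only operates on the open set $\{\tau\neq0\}$, which is precisely why the fourth condition is stated with the prefactor $\tau$ rather than as a bare ODE on $\phi$; and the hypothesis that $F$ is not Riemannian is what guarantees the $\phi$-combinations being matched (such as $\phi'^2+\phi\phi''-k_1\phi(\phi-s\phi')$) are not identically zero, without which the separation determines nothing --- you gestured at this but it needs to be run as an actual nondegeneracy lemma.
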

The meaning of some notations here can be found in Section 2.

When $\tau=0$, (\ref{Gi}) becomes $\G=2\theta y^i+\a^2\theta^i$, which implies $\a$ is dually flat. Moreover, (\ref{rij}) and (\ref{sij}) are equivalent to $\bij=2\theta_i\bj-2b_k\theta^k\aij$, i.e., $\b$ is dually related to $\a$ with $c(x)+2b_k\theta^k=0$. In fact, this is a {\it trivial} case. Because in this case, $F=\pab$ will be always dually flat for any suitable function $\ps$ by Theorem \ref{maincf}. In this paper, we will focus on the non-trivial case. Thereby, the function $\ps$ must satisfy a 3-parameters equation
\begin{eqnarray}\label{phi}
s(k_2-k_3s^2)(\p\p'-s\p'^2-s\p\p'')-(\p'^2+\p\p'')+k_1\p(\p-s\p')=0.
\end{eqnarray}

It is clear that the geometry meaning of the original data $\a$ and $\b$ for the dually flat $\ab$-metrics is very obscure. The main aim of this paper is to provide a luminous description for a non-trivial dually flat $\ab$-metric. By using a special class of metric deformations called {\it $\b$-deformations}, we prove that {\it the dual flatness of an $\ab$-metrics always arises from that of some Riemannian metric}, just as Randers metrics.
\begin{theorem}\label{main1}
Let $F=\pab$ be a Finsler metric on an open subset $U\subseteq\RR^n$ with $n\geq3$, where $\ps$ satisfies (\ref{phi}). Suppose $F$ is not of Riemannian type and $\phi'(0)\neq0$. Then $F$ is dually flat if and only if $\a$ and $\b$ can be expressed as
\begin{eqnarray*}
\a=\eta(\bar b^2)\sqrt{\ba^2-\f{(k_2-k_3\bar
b^2)}{1+k_2\bar b^2-k_3\bar b^4}\bb^2},\quad\b=-\f{\eta(\bar b^2)}{(1+k_2\bar b^2-k_3\bar b^4)^\f{1}{2}}\bb,
\end{eqnarray*}
where $\ba$ is a dually flat Riemannian metric on $U$, $\bb$ is dually related to $\ba$, $\bar b:=\|\bb\|_{\ba}$.
The deformation factor $\eta(\bar b^2)$ is determined by the coefficients $k_1,k_2,k_3$ and given in the following five case,
\begin{enumerate}[(1)]
\item When $k_3=0,~k_2=0$,
$$\eta(\bar b^2)=\exp\left\{\f{k_1\bar b^2}{4}\right\};$$
\item When $k_3=0,~k_2\neq0$,
$$\eta(\bar b^2)=\left\{1+k_2\bar b^2\right\}^{\f{k_1-k_2}{4k_2}};$$
\item When $k_3\neq0,~\Delta_1>0$,
$$\eta(\bar b^2)=\f{\left\{\f{\sqrt{\Delta_1}+k_2}{\sqrt{\Delta_1-k_2}}\cdot\f{\sqrt\Delta_1-k_2+2k_3\bar b^2}{\sqrt\Delta_1+k_2-2k_3\bar
b^2}\right\}^\f{2k_1-k_2}{8\sqrt\Delta_1}}{\sqrt[8]{1+k_2\bar
b^2-k_3\bar b^4}};$$
\item When $k_3\neq0,~\Delta_1=0$,
$$\eta(\bar b^2)=\f{\sqrt[4]{2}\exp\left\{\f{k_2-2k_1}{2k_2}\left[\f{1}{2+k_2\bar b^2}-\f{1}{2}\right]\right\}}{\sqrt[4]{2+k_2\bar b^2}};$$
\item When $k_3\neq0,~\Delta_1<0$,
$$\eta(\bar b^2)=\f{\exp\left\{\f{2k_1-k_2}{4\sqrt{-\Delta_1}}\left(\arctan\f{k_2-2k_3\bar
b^2}{\sqrt{-\Delta_1}}-\arctan\f{k_2}{\sqrt{-\Delta_1}}\right)\right\}}{\sqrt[8]{1+k_2\bar b^2-k_3\bar b^4}},$$
\end{enumerate}
where $\Delta_1:=k_2^2+4k_3$.
\end{theorem}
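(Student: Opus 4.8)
The plan is to work throughout in an adapted coordinate system and to take Xia's characterization, Theorem~\ref{maincf}, as the starting point. In the non-trivial regime $\tau\neq0$ the profile $\ps$ must satisfy the structural equation~(\ref{phi}), while the underlying data $\a,\b$ are constrained by (\ref{Gi})--(\ref{sij}). The whole statement then reduces to transporting these three conditions, through an explicit $\b$-deformation, into the two transparent conditions ``$\ba$ is dually flat'' and ``$\bb$ is dually related to $\ba$'', and back again. I would therefore first posit the deformation in the form displayed in the theorem, with $\eta$ an undetermined positive factor, and invert it to get $\ba^2=\eta^{-2}\left(\a^2+(k_2-k_3 b^2)\b^2\right)$ and $\bb=-\eta^{-1}(1+k_2 b^2-k_3 b^4)^{1/2}\b$. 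The first useful fact, obtained by a Sherman--Morrison computation of $a^{ij}$ from $\baij$, is that this family preserves the pointwise norm, $\|\bb\|_{\ba}^2=b^2$; hence $\bar b=b$ and $\eta$ may be regarded consistently as a function of the single variable $u:=b^2=\bar b^2$.

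The technical core is the transformation law for the spray and for the covariant derivative of the $1$-form under such a deformation. Using the standard formulas governing how $\G$ and $\bij$ change when $\a$ is replaced by a Riemannian metric of the shape $\ba^2=\lambda(b^2)\a^2+\mu(b^2)\b^2$, I would express $\bG$ and $\bbij$ in terms of $\G,\ \rij,\ \sij$, the deformation factors, and the derivative $\eta'=\ud\eta/\ud u$, which enters because $\eta$ depends on $x$ through $b^2$. Feeding Xia's conditions (\ref{Gi})--(\ref{sij}) into these expressions and then demanding that $\bG$ take the dually flat shape $2\bar\theta y^i+\ba^2\bar\theta^i$ and that $\bbij$ take the dually related shape $2\bar\theta_i\bbj+\bar c\,\baij$ accomplishes two things at once: the algebraic coefficients of the deformation are forced to equal $k_2-k_3 b^2$ and $(1+k_2 b^2-k_3 b^4)^{1/2}$ as written above, and the remaining trace part collapses to the single first-order ODE
\begin{eqnarray*}
4\left(1+k_2 u-k_3 u^2\right)\f{\eta'}{\eta}=k_1-k_2+k_3 u .
\end{eqnarray*}
It is precisely here that the hypothesis~(\ref{phi}) is indispensable: that identity supplies the cancellations turning the off-diagonal remainder into a pure multiple of $\baij$ (respectively of $\ba^2\bar\theta^i$); without it the deformed $1$-form would fail to be dually related.

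Solving the ODE is then routine but case-laden. Integrating $\ud\ln\eta/\ud u=(k_1-k_2+k_3 u)/\big(4(1+k_2 u-k_3 u^2)\big)$ by partial fractions, the outcome is dictated entirely by the roots of the quadratic denominator $1+k_2 u-k_3 u^2$, whose discriminant in $u$ is $\Delta_1=k_2^2+4k_3$. When $k_3=0$ the denominator is linear or constant, yielding the power and the exponential of cases~(2) and~(1); when $k_3\neq0$ the three subcases $\Delta_1>0$, $\Delta_1=0$, $\Delta_1<0$ correspond to two distinct real roots, one double root, and a complex-conjugate pair, which integrate respectively to the product-of-powers, exponential-of-reciprocal, and arctangent forms of cases~(3)--(5). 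Normalizing the constant of integration by $\eta(0)=1$, the value at the undeformed point, pins down the fixed numerical prefactors. For the converse I would simply reverse the construction: starting from a dually flat $\ba$ with a dually related $\bb$, define $\a$ and $\b$ by the displayed formulas with $\eta$ the solution just obtained, and verify directly that (\ref{Gi})--(\ref{sij}) hold, so that Theorem~\ref{maincf} returns the dual flatness of $F$. I expect the main obstacle to be the bookkeeping in the covariant-derivative transformation law, where separating the $2\bar\theta_i\bbj$ part from the $\baij$ part is exactly the point at which~(\ref{phi}) must be invoked; by comparison the integration producing the five cases is the easy part.
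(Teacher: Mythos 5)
Your proposal follows essentially the same route as the paper. The paper performs the three elementary $\b$-deformations of Lemmas \ref{beta1}--\ref{beta3} with $\kappa=-k_2+k_3b^2$, $\rho'=-\f{k_1-k_2+k_3b^2}{4(1+k_2b^2-k_3b^4)}$ and $\nu=-\sqrt{1+k_2b^2-k_3b^4}\,e^{\rho}$, and these compose to exactly your one-step ansatz $\ba^2=\eta^{-2}\left(\a^2+(k_2-k_3b^2)\b^2\right)$, $\bb=-\eta^{-1}(1+k_2b^2-k_3b^4)^{1/2}\b$ with $\eta=e^{-\rho}$; your ODE $4(1+k_2u-k_3u^2)\,\eta'/\eta=k_1-k_2+k_3u$, the norm preservation $\bar b=b$ via the inverse-metric formula, the partial-fraction case analysis governed by the sign of $\Delta_1$, the normalization $\eta(0)=1$, and the reversal of the deformation for the converse all coincide with the paper's argument. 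One point needs correcting, though: equation (\ref{phi}) is \emph{not} what produces the cancellations in the covariant-derivative bookkeeping. In the paper's computation those cancellations follow entirely from Xia's conditions (\ref{Gi})--(\ref{sij}) --- more precisely from their consequences (\ref{temp1})--(\ref{temp6}) --- together with the algebraic identity $\kappa^2+k_2\kappa-k_3=-\kappa'(1+k_2b^2-k_3b^4)$ enjoyed by the choice $\kappa=-k_2+k_3b^2$; equation (\ref{phi}) never enters the deformation algebra at all. Its only role is to make Theorem \ref{maincf} applicable in the non-trivial case $\tau\neq0$ (it is the fourth condition there), both when extracting (\ref{Gi})--(\ref{sij}) from the dual flatness of $F$ and when returning dual flatness in the converse. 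This misattribution would not derail your computation --- you would simply find the remainder collapsing without ever invoking (\ref{phi}) --- but the stated mechanism is wrong and should be fixed. A smaller remark: the paper only verifies that its deformation factors \emph{suffice}; your claim that the coefficients $k_2-k_3b^2$ and $(1+k_2b^2-k_3b^4)^{1/2}$ are \emph{forced} is stronger than what is needed or established for the equivalence.
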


$\b$-deformations, which play a key role in the proof of Theorem \ref{main1}, are a new method in Riemann-Finsler geometry developed by the author in the research of projectively flat $\ab$-metrics\cite{yct-dhfp}. Roughly speaking, the method of $\b$-deformations is aim to make clear the
latent light. For an analogy, $\a$ and $\b$ just like two ropes tangles together, and
it is possible to unhitch them using $\b$-deformations. The navigation expression
for Randers metrics is a representative example. In fact, it is just a specific
kind of $\b$-deformations. In other words, $\b$-deformations can be regarded as a
natural generalization of the navigation expression for Randers metrics. See also \cite{szm-yct-oesm} for more applications.

The argument in this paper is similar to that in \cite{yct-odfr}, but we don't show the original ideas here. One can obtain a more deep analysis in the latter.

In Section \ref{4}, we will use a skillful method to solve the basic equation (\ref{phi}). As a result, we can construct infinity many non-trivial dually flat $\ab$-metrics combining with (\ref{dfR}) and (\ref{drb}). In particular, the following metrics
$$F=\sqrt{\alpha^2+2\varepsilon\a\b+\kappa\b^2}$$
is locally dually flat if and only if
\begin{eqnarray}\label{example}
\a=(1-\kappa\bar b^2)^{-1}\sqrt{(1-\kappa\bar b^2)\ba^2+\kappa\bb^2},\qquad\b=-(1-\kappa\bar b^2)^{-1}\bb,
\end{eqnarray}
where $\ba$ is locally dually flat and $\bb$ is dually related to $\ba$.

Taking $\kappa=1$ and $\varepsilon=1$, one can see that (\ref{example}) is just the Randers metrics $F=\a+\b$. Taking $\kappa=0$ and $\varepsilon=\frac{1}{2}$, then we can obtain a very simple kind of dually flat $\ab$-metrics given in the form
$$F=\sqrt{\a(\a+\b)}.$$

\section{Preliminaries}\label{2}
Let $M$ be a smooth $n$-dimensional manifold. A Finsler metric $F$ on $M$ is a continuous function
$F:TM\to[0,+\infty)$ with the following properties:
\begin{enumerate}[(i)]
\item {\it Regularity}: $F$ is $C^\infty$ on the entire slit tangent bundle $TM\backslash\{0\}$;
\item {\it Positive homogeneity}: $F(x,\lambda y)=\lambda F(x,y)$ for all $\lambda>0$;
\item {\it Strong convexity}: the fundamental tensor $g_{ij}:=[\frac{1}{2}F^2]_{y^iy^j}$ is positive definite for all $(x,y)\in TM\backslash\{0\}$.
\end{enumerate}
Here $x=(x^i)$ denotes the coordinates of the point in $M$ and $y=(y^i)$ denotes the coordinates of the vector in $T_xM$.

For a Finsler metric, the {\it geodesics} are characterized by the geodesic equation
$$\ddot c^i(t)+2G^i\left(c(t),\dot c(t)\right)=0,$$
where
$$G^i(x,y):=\f{1}{4}g^{il}\left\{[F^2]_{x^ky^l}y^k-[F^2]_{x^l}\right\}$$
are called the {\it spray coefficients} of $F$. Here $(g^{ij}):=(g_{ij})^{-1}$. For a Riemannian metric $\a$, the spray coefficients are given by
$$G^i_\a(x,y)=\f{1}{2}\Gamma^i{}_{jk}(x)y^jy^k$$
in terms of the Christoffel symbols of $\a$.

By definition, an $\ab$-metric is a Finsler metric in the form $F=\pab$, where $\a=\sqrt{a_{ij}(x)y^iy^j}$ is a Riemannian metric, $\b=b_i(x)y^i$ is an $1$-form and $\ps$ is a positive smooth function on some symmetric open interval $(-b_o,b_o)$.

On the other hand, the so-called $\b$-deformations are a triple of metric deformations in terms of $\a$ and $\b$ listed below:
\begin{eqnarray*}
&\ta=\sqrt{\a^2-\kappa(b^2)\b^2},\qquad\tb=\b;\\
&\ha=e^{\rho(b^2)}\ta,\qquad\hb=\tb;\\
&\ba=\ha,\qquad\bb=\nu(b^2)\hb.
\end{eqnarray*}

Some basic formulas for $\b$-deformations are listed below. Be attention that the notation `$\dot b_{i|j}$' always means the covariant derivative of the $1$-form `$\dot\b$' with respect to the corresponding Riemannian metric `$\dot\a$', where the symbol `~$\dot{}$~' can be nothing, `~$\tilde{}$~', `~$\hat{}$~' or `~$\bar{}$~' in this paper. Moreover, we need the following abbreviations,
\begin{eqnarray*}
&r_{00}:=r_{ij}y^iy^j,~r_i:=r_{ij}y^j,~r_0:=r_iy^i,~r:=r_ib^i,\\
&s_{i0}:=s_{ij}y^j,~s^i{}_0:=a^{ij}s_{j0},~s_i:=s_{ij}y^j,~s_0:=s_ib^i,
\end{eqnarray*}
where $\rij$ and $\sij$ are the symmetrization and antisymmetrization of $\bij$ respectively, i.e.,
$$\rij:=\f{1}{2}(\bij+b_{j|i}),\quad\sij:=\f{1}{2}(\bij-b_{j|i}).$$
Roughly speaking, indices are raised and lowered by $\aij$, vanished by contracted with $b^i$ and changed to be `${}_0$' by contracted with $y^i$. Since $\bij-b_{j|i}=\frac{\partial b_i}{\partial x^j}-\pp{b_j}{x^i}$, $\sij=0$ implies $\b$ is closed, and vice versa.

\begin{lemma}\cite{yct-dhfp}\label{beta1}
Let $\ta=\sqrt{\a^2-\kappa(b^2)\b^2}$, $\tb=\b$. Then
\begin{eqnarray*}
\tG&=&\G-\frac{\kappa}{2(1-\kappa b^2)}\big\{2(1-\kappa b^2)\b s^i{}_0+r_{00}b^i+2\kappa s_0\b b^i\big\}\\
&&+\frac{\kappa'}{2(1-\kappa b^2)}\big\{(1-\kappa b^2)\b^2(r^i+s^i)+\kappa r\b^2b^i-2(r_0+s_0)\b
b^i\big\},\\
\tbij&=&\bij+\frac{\kappa}{1-\kappa b^2}\big\{b^2\rij+\bi\sj+\bj\si\big\}\\
&&-\frac{\kappa'}{1-\kappa b^2}\big\{r\bi\bj-b^2\bi(\rj+\sj)-b^2\bj(\ri+\si)\big\}.
\end{eqnarray*}
\end{lemma}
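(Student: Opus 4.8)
The plan is to reduce both assertions to one object --- the difference tensor between the Levi--Civita connections of $\ta$ and $\a$ --- and then to contract it against $y^jy^k$ for the spray formula and against $\bk$ for the derivative formula.

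Since $\ta^2=\a^2-\kappa(b^2)\b^2$ is quadratic in $y$, the deformed metric $\ta$ is again Riemannian, with fundamental tensor $\taij=\aij-\kappa\bi\bj$, where I abbreviate $\kappa:=\kappa(b^2)$. I would first record its inverse via the Sherman--Morrison formula,
\[
\ta^{il}=a^{il}+\f{\kappa}{1-\kappa b^2}\,b^ib^l,\qquad b^i:=a^{ij}\bj,
\]
which is checked at once by contracting with $\taij$; the single pole at $\kappa b^2=1$ is precisely the origin of the denominators $1-\kappa b^2$ in both claimed formulas. Because $\tb=\b$, the symbol $\tbij$ is the covariant derivative of the \emph{same} $1$-form taken with respect to $\ta$, and $\tG=\frac12\tGijk\,y^jy^k$; hence everything is controlled by the tensor $Q^i{}_{jk}:=\tGijk-\Gijk$.

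Next I would compute $Q^i{}_{jk}$ from the Koszul-difference identity
\[
Q^i{}_{jk}=\f12\ta^{il}\left(\nabla_j\tilde a_{kl}+\nabla_k\tilde a_{jl}-\nabla_l\tilde a_{jk}\right),
\]
where $\nabla$ denotes covariant differentiation with respect to $\a$, so that $\nabla_l a_{jk}=0$. The only input beyond the product rule is the derivative of the scalar $\kappa$: from $\nabla_l(b^2)=2b^pb_{p|l}=2(r_l+s_l)$ one gets $\nabla_l\kappa=2\kappa'(r_l+s_l)$, and therefore
\[
\nabla_j\tilde a_{kl}=-2\kappa'(r_j+s_j)\bk b_l-\kappa(b_{k|j}b_l+\bk b_{l|j}),\qquad b_{k|j}=r_{kj}+s_{kj}.
\]
This is the step that injects every $\kappa'$-term.

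For the spray coefficients I would contract with $\frac12 y^jy^k$; using the symmetry of $y^jy^k$ this gives $\tG-\G=\frac14\ta^{il}\left(2\nabla_j\tilde a_{kl}-\nabla_l\tilde a_{jk}\right)y^jy^k$. The reductions then produce $\roo$ from $b_{k|j}y^jy^k$, the factor $\b$ from $\bk y^k$, the field $\sio$ from $a^{il}s_{l0}$, the scalars $r_0,s_0$ from $(r_j+s_j)y^j$, and $r^i,s^i$; the $a^{il}$-part of $\ta^{il}$ supplies the terms not proportional to $b^i$, while the $b^ib^l$-part, combined with the identity $1+\frac{\kappa b^2}{1-\kappa b^2}=\frac{1}{1-\kappa b^2}$, upgrades each $b^i$-coefficient to the stated shape with denominator $1-\kappa b^2$. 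For the derivative formula, $\tb=\b$ gives $\tbij=\bij-Q^k{}_{ij}\bk$, and the key simplification $\ta^{kl}\bk=\frac{b^l}{1-\kappa b^2}$ reduces $Q^k{}_{ij}\bk$ to $\frac{1}{2(1-\kappa b^2)}b^l\left(\nabla_i\tilde a_{jl}+\nabla_j\tilde a_{il}-\nabla_l\tilde a_{ij}\right)$; carrying out the three $b^l$-contractions collapses the $\kappa$-part to $-2\kappa(b^2r_{ij}+b_is_j+b_js_i)$ and the $\kappa'$-part to $2\kappa'(rb_ib_j-b^2(r_i+s_i)b_j-b^2(r_j+s_j)b_i)$, which is exactly the asserted expression.

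The whole argument is mechanical, so the main obstacle is simply bookkeeping, and three points require care. First, one must obtain $\nabla_l\kappa=2\kappa'(r_l+s_l)$ correctly, since it is the sole source of the $\kappa'$-terms. Second, one must apply Sherman--Morrison twice and let $1+\frac{\kappa b^2}{1-\kappa b^2}=\frac{1}{1-\kappa b^2}$ assemble every $1-\kappa b^2$ denominator. Third, one must respect the antisymmetry of $s_{ij}$ when a covariant derivative is contracted with $b$ on its first rather than its second slot --- the resulting sign difference between $b^pb_{k|p}=r_k-s_k$ and $b^pb_{p|k}=r_k+s_k$ is what cleanly separates $r_{ij}$, $s_i$ and $s_j$ in the final formula, and a single slip there scrambles that separation.
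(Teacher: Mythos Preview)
Your approach is correct and complete in outline: computing the difference tensor $Q^i{}_{jk}=\tGijk-\Gijk$ via the Koszul formula applied to $\taij=\aij-\kappa\bi\bj$, then contracting with $\tfrac12 y^jy^k$ for $\tG-\G$ and with $\bk$ for $\tbij-\bij$, is exactly the natural direct derivation, and the three cautionary points you flag (the chain rule $\nabla_l\kappa=2\kappa'(r_l+s_l)$, the Sherman--Morrison inverse producing the $1-\kappa b^2$ denominators, and the sign discipline $b^pb_{k|p}=r_k-s_k$ versus $b^pb_{p|k}=r_k+s_k$) are precisely the places where the computation can go wrong.

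Note, however, that the present paper does \emph{not} prove this lemma: it is quoted verbatim from \cite{yct-dhfp} and used as a black box in the proofs of Lemmas~\ref{step1}--\ref{step3}. So there is no ``paper's own proof'' to compare against here; your argument supplies a self-contained proof where the paper simply cites one. In the cited source the derivation is carried out along the same lines you describe, so your write-up is fully consistent with the intended proof.
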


\begin{lemma}\cite{yct-dhfp}\label{beta2}
Let $\ha=e^{\rho(b^2)}\ta$, $\hb=\tb$. Then
\begin{eqnarray*}
\hG&=&\tG+\rho'\left\{2(r_0+s_0)y^i-(\a^2-\kappa \b^2)\left(r^i+s^i+\frac{\kappa}{1-\kappa b^2}rb^i\right)\right\},\\
\hbij&=&\tbij-2\rho'\left\{\bi(\rj+\sj)+\bj(\ri+\si)-\frac{1}{1-\kappa b^2}r(\aij-\kappa\bi\bj)\right\}.
\end{eqnarray*}
\end{lemma}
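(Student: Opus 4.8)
The deformation $\ha=e^{\rho(b^2)}\ta$ is nothing but a conformal change of the Riemannian metric $\ta$ with conformal factor $e^\sigma$, where $\sigma:=\rho(b^2)$ is a scalar function on $U$ and $b^2=a^{ij}\bi\bj$ is the $\a$-norm of $\b$. The plan is therefore to invoke the classical transformation law for the Levi-Civita connection under a conformal change and then contract it to read off both $\hG$ and $\hbij$. Since $\hat a_{ij}=e^{2\sigma}\taij$, the Christoffel symbols obey
\begin{eqnarray*}
\hGkij=\tGkij+\dki\sigma_j+\dkj\sigma_i-\taij\tilde a^{kl}\sigma_l,
\end{eqnarray*}
where $\sigma_i:=\partial\sigma/\partial x^i$ and $(\tilde a^{kl})$ is the inverse of $(\taij)$. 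Everything then reduces to rewriting the right-hand side in terms of the $\a$-data $\a,\b,\rij,\sij$.

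Two elementary reductions accomplish this rewriting. First, because $b^2$ is a scalar and the $\a$-covariant derivative of $\b$ splits as $\bij=\rij+\sij$, one has
\begin{eqnarray*}
\sigma_i=\rho'\,\partial_i(b^2)=2\rho'\,b^kb_{k|i}=2\rho'(\ri+\si),
\end{eqnarray*}
using $b^kr_{ki}=\ri$ and $b^ks_{ki}=\si$. Second, since $\taij=\aij-\kappa\bi\bj$ is a rank-one perturbation of $\aij$, the Sherman--Morrison formula gives
\begin{eqnarray*}
\tilde a^{ij}=a^{ij}+\f{\kappa}{1-\kappa b^2}b^ib^j,
\end{eqnarray*}
from which the two contractions I shall need follow at once: $\tilde a^{il}(r_l+s_l)=r^i+s^i+\f{\kappa}{1-\kappa b^2}rb^i$ (here $b^l(r_l+s_l)=r$ because $b^ls_l=0$) and $\tilde a^{kl}\bk=\f{b^l}{1-\kappa b^2}$.

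With these in hand the two formulas drop out by pure contraction. Contracting the Christoffel law with $\tfrac12 y^iy^j$ and using $\ta^2=\a^2-\kappa\b^2$ yields
\begin{eqnarray*}
\hG&=&\tG+\sigma_ky^k\,y^i-\tfrac{1}{2}\ta^2\tilde a^{il}\sigma_l\\
&=&\tG+\rho'\Big\{2(r_0+s_0)y^i-(\a^2-\kappa\b^2)\big(r^i+s^i+\tfrac{\kappa}{1-\kappa b^2}rb^i\big)\Big\},
\end{eqnarray*}
which is the first claim. For the second, since $\hat b_i=\tilde b_i=b_i$ the two covariant derivatives differ only through the connection, $\hbij-\tbij=-(\hGkij-\tGkij)\bk$. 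Substituting the Christoffel difference, contracting with $\bk$, and using $\tilde a^{kl}\bk=\f{b^l}{1-\kappa b^2}$ together with $\taij=\aij-\kappa\bi\bj$ gives exactly
\begin{eqnarray*}
\hbij=\tbij-2\rho'\Big\{\bi(\rj+\sj)+\bj(\ri+\si)-\tfrac{1}{1-\kappa b^2}r(\aij-\kappa\bi\bj)\Big\}.
\end{eqnarray*}

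The computation has no genuinely hard step; the only thing that demands care is the bookkeeping of which metric is doing the raising, lowering and differentiating. All the $r$'s, $s$'s and raised indices in the target formulas are $\a$-quantities, whereas the conformal law naturally produces $\ta$-quantities. The sole content of the proof is the reconciliation of the two, achieved through the identity $\partial_i(b^2)=2(\ri+\si)$ and the Sherman--Morrison inversion of $\taij$; once these are fixed, matching the result to the stated right-hand sides is purely mechanical.
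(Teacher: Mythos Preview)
Your argument is correct. The paper does not actually prove this lemma; it is quoted verbatim from the cited preprint \cite{yct-dhfp} and used as a black box, so there is no ``paper's own proof'' to compare against. Your derivation --- recognising the deformation as the conformal change $\hat a_{ij}=e^{2\sigma}\tilde a_{ij}$ with $\sigma=\rho(b^2)$, applying the standard conformal transformation law for Christoffel symbols, and then translating the resulting $\ta$-quantities back into $\a$-quantities via $\partial_i(b^2)=2(r_i+s_i)$ and the Sherman--Morrison inverse $\tilde a^{ij}=a^{ij}+\frac{\kappa}{1-\kappa b^2}b^ib^j$ --- is exactly the natural route and all the contractions check.
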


\begin{lemma}\cite{yct-dhfp}\label{beta3}
Let $\ba=\ha$, $\bb=\nu(b^2)\hb$. Then
\begin{eqnarray*}
\bG&=&\hG,\\
\bbij&=&\nu\hbij+2\nu'\bi(\rj+\sj).
\end{eqnarray*}
\end{lemma}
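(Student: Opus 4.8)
The plan is to treat the two asserted identities separately and to exploit that this third $\b$-deformation, unlike the first two, leaves the underlying Riemannian metric completely unchanged. The first identity $\bG=\hG$ then costs nothing: since $\ba=\ha$ is an \emph{equality of Riemannian metrics} ($\baij=\haij$), the two share one Levi-Civita connection and hence the same Christoffel symbols $\bGijk=\hGijk$, so their spray coefficients $\frac{1}{2}\bGijk y^jy^k$ and $\frac{1}{2}\hGijk y^jy^k$ coincide identically. All the content therefore sits in the transformation law for $\bbij$.

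For that law I would start from $\bbi=\nu(b^2)\hbi$ and differentiate covariantly. Because $\ba=\ha$, the covariant derivative defining $\bbij$ is taken with respect to the \emph{same} connection as the one defining $\hbij$, so no change-of-connection terms can appear — this is precisely what makes Lemma \ref{beta3} far shorter than Lemmas \ref{beta1} and \ref{beta2}. Applying the Leibniz rule to the product gives
\begin{eqnarray*}
\bbij=\nu(b^2)\,\hbij+\big(\nabla_j\nu(b^2)\big)\,\hbi,
\end{eqnarray*}
where $\nabla_j$ is the $\ha$-covariant derivative. The decisive point is that $\nu(b^2)$ is a \emph{scalar} function on $U$, so its covariant derivative equals its ordinary partial derivative, $\nabla_j\nu(b^2)=\nu'(b^2)\,\partial_j b^2$, a value that does not depend on the chosen connection.

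It then remains only to evaluate $\partial_j b^2$ and to identify $\hbi$. Throughout the three lemmas $b^2:=\|\b\|_\a^2=a^{ij}b_ib_j$ is the norm of the \emph{original} $1$-form with respect to the \emph{original} Riemannian metric, hence a fixed scalar on $U$; computing its gradient through the $\a$-connection (where $\nabla a=0$) yields $\partial_j b^2=2b^kb_{k|j}=2b^k(r_{kj}+s_{kj})=2(\rj+\sj)$, with $\rj=b^kr_{kj}$ and $\sj=b^ks_{kj}$ the usual quantities formed from $\b$ and $\a$. Finally $\hb=\tb=\b$ forces $\hbi=\bi$, and substituting back produces
\begin{eqnarray*}
\bbij=\nu\,\hbij+2\nu'\,\bi(\rj+\sj),
\end{eqnarray*}
exactly as claimed.

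The main obstacle here is conceptual bookkeeping rather than calculation: one must consistently read the argument $b^2$ of $\nu$ as $\|\b\|_\a^2$, read the undecorated $\rj,\sj$ as formed with respect to $\a$, and recognize that differentiating the scalar $\nu(b^2)$ is connection-independent, so that its gradient may be computed in the convenient $\a$-connection even though the Leibniz step is carried out in the $\ha$-connection. Once these conventions are pinned down the verification involves no heavy computation, in sharp contrast to the earlier two deformations, where the Riemannian metric genuinely changes and the Christoffel symbols must be transformed.
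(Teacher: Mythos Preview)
Your argument is correct. The paper itself does not prove this lemma at all: it is quoted from \cite{yct-dhfp} and stated without proof, so there is no in-paper argument to compare against. What you wrote is the natural verification --- $\ba=\ha$ forces identical Christoffel symbols and hence $\bG=\hG$, and the Leibniz rule applied to $\bbi=\nu(b^2)\hbi$ together with $\partial_j b^2=2b^kb_{k|j}=2(\rj+\sj)$ and $\hbi=\bi$ yields the second formula --- and your emphasis that the scalar $\nu(b^2)$ has a connection-independent gradient is exactly the point that makes this deformation trivial compared to the first two.
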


\section{Proof of Theorem \ref{main1}}\label{3}
Suppose that $F=\pab$ is a non-trivial dually flat $\ab$-metric on $U$. According to Theorem \ref{maincf}, it is easy to obtain the following simple facts:
\begin{eqnarray}
\rij&=&\theta_i\bj+\theta_j\bi+(3\tau+2\tau b^2-2b_k\theta^k)\aij+\tau(3k_2-2-3k_3b^2)\bi\bj,\label{temp1}\\
\sio&=&\b\theta^i-\theta b^i,\label{temp2}\\
s_0&=&b_k\theta^k\b-b^2\theta,\label{temp3}\\
\ri+\si&=&3\tau(1+k_2b^2-k_3b^4)\bi,\label{temp4}\\
\bi\sj+\bj\si&=&2\bk\theta^k\bi\bj-b^2(\theta_i\bj+\theta_j\bi),\label{temp5}\\
r&=&3\tau(1+k_2b^2-k_3b^4)b^2.\label{temp6}
\end{eqnarray}
\begin{lemma}\label{step1}
Take $\kappa(b^2)=-k_2+k_3b^2$, then
\begin{eqnarray*}
\tG=[2\theta+\tau\b(3k_1-2)]y^i+\ta^2\theta^i
+\f{\tau(3k_2-2-3k_3b^2)-2(k_2-k_3b^2)b_k\theta^k}{2(1+k_2b^2-k_3b^4)}\ta^2b^i.
\end{eqnarray*}
\end{lemma}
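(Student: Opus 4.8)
The plan is to apply Lemma \ref{beta1} directly, substituting the specific choice $\kappa(b^2)=-k_2+k_3b^2$ and then simplifying the result using the structural identities \eqref{temp1}--\eqref{temp6} that follow from the dual flatness hypothesis. First I would compute $\kappa'=k_3$ (so that $\kappa'$ is a constant), and record the quantity $1-\kappa b^2 = 1+k_2b^2-k_3b^4$, which is exactly the denominator appearing throughout the target formula; this is a good sign that the choice of $\kappa$ is engineered precisely to produce the stated coefficient of $\ta^2 b^i$. I would then feed $\G$ from \eqref{Gi} into the formula of Lemma \ref{beta1} and begin consolidating terms.

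The key computational steps fall into three groups. First, I would handle the $s^i{}_0$ and $s_0$ terms: by \eqref{temp2} we have $s^i{}_0=\b\theta^i-\theta b^i$ and by \eqref{temp3} we have $s_0=b_k\theta^k\b-b^2\theta$, so the bracket $2(1-\kappa b^2)\b s^i{}_0 + 2\kappa s_0\b b^i$ collapses after regrouping the $\theta^i$ and $b^i$ contributions. The $\b\theta^i$ piece should combine with the $\a^2\theta^i$ already present in $\G$ to rebuild $\ta^2\theta^i=(\a^2-\kappa\b^2)\theta^i$ — I expect the $\b^2\theta^i$ correction to emerge here. Second, I would substitute $r_{00}$ from \eqref{rij} into the $r_{00}b^i$ term; the $\a^2$-proportional part of $r_{00}$ will again want to be rewritten in terms of $\ta^2$. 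Third, I would use \eqref{temp4} and \eqref{temp6} to replace $r^i+s^i=3\tau(1+k_2b^2-k_3b^4)b^i$ and $r=3\tau(1+k_2b^2-k_3b^4)b^2$ in the $\kappa'$-line, so that those terms become explicit multiples of $\b^2 b^i$ (via $r_0+s_0$, which by \eqref{temp3} and $r_0=r_iy^i$ needs the contraction of \eqref{temp4} with $y^i$, giving $r_0+s_0=3\tau(1+k_2b^2-k_3b^4)\b$).

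The main obstacle I anticipate is the bookkeeping in the $\theta$-linear part of the $y^i$ coefficient: the stated result has $y^i$-coefficient $2\theta+\tau\b(3k_1-2)$, and since $\G$ already contributes $[2\theta+(3k_1-2)\tau\b]y^i$, I must verify that all the correction terms in Lemma \ref{beta1} that are proportional to $y^i$ cancel out exactly. The $\kappa$-line of Lemma \ref{beta1} carries no bare $y^i$, but once I rewrite $\ta^2\theta^i$ from $\a^2\theta^i$ I introduce a spurious $\kappa\b^2\theta^i=-(k_2-k_3b^2)\b^2\theta^i$ term that must be absorbed, and the various $\b^2b^i$ terms coming from the $r_{00}b^i$, $\kappa s_0 \b b^i$, $\kappa r\b^2 b^i$, and $(r_0+s_0)\b b^i$ contributions must all coalesce into the single coefficient $\frac{\tau(3k_2-2-3k_3b^2)-2(k_2-k_3b^2)b_k\theta^k}{2(1+k_2b^2-k_3b^4)}\ta^2 b^i$. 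The delicate point is that the target is written with $\ta^2 b^i$ rather than $\b^2 b^i$ or $\a^2 b^i$, so after collecting I would need to convert the accumulated $\b^2 b^i$ and $\a^2 b^i$ coefficients into a common $\ta^2 b^i=(\a^2-\kappa\b^2)b^i$ form and check that the leftover is zero; tracking the $b_k\theta^k$ terms (which enter both through $r_{00}$ and through $s_0$) without sign errors is where the calculation is most error-prone.
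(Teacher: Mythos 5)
Your plan reproduces the paper's proof essentially step for step: the paper likewise substitutes (\ref{Gi}) together with the facts (\ref{temp1})--(\ref{temp6}) into Lemma \ref{beta1}, absorbs the $-\kappa\b^2\theta^i$ correction into $\ta^2\theta^i$, and collects all remaining terms into an $\a^2 b^i$ part and a $\b^2 b^i$ part. The one step you leave as ``check that the leftover is zero'' is exactly what the paper settles with the algebraic identity $\kappa^2+k_2\kappa-k_3=-\kappa'(1+k_2b^2-k_3b^4)$, valid for $\kappa=-k_2+k_3b^2$, which forces the $\b^2 b^i$ coefficient to be $-\kappa$ times the $\a^2 b^i$ coefficient so that the two fuse into the single stated multiple of $\ta^2 b^i$.
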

\begin{proof}
By (\ref{Gi}), (\ref{temp1})-(\ref{temp6}) and Lemma \ref{beta1}, we have
\begin{eqnarray*}
\tG&=&[2\theta+(3k_1-2)\tau\b]y^i+\a^2(\theta^i-\tau b^i)+\f{3}{2}k_3\tau\b^2 b^i\\
&&-\f{\kappa}{2(1-\kappa b^2)}\big\{2(1-\kappa b^2)\b(\b\theta^i-\theta b^i)+2\theta\b b^i+(3\tau+2\tau b^2-b_k\theta^k)\a^2 b^i\\
&&+\tau(3k_2-2-3k_3b^2)\b^2 b^i+2\kappa(b_k\theta^k\b-b^2\theta)\b b^i\big\}\\
&&+\f{\kappa'}{2(1-\kappa b^2)}\big\{3\tau(1-\kappa b^2)(1+k_2b^2-k_3b^4)\b^2b^i\\
&&+3\tau\kappa(1+k_2b^2-k_3b^4)b^2\b^2b^i-6\tau(1+k_2b^2-k_3b^4)\b^2 b^i\big\}\\
&=&[2\theta+(3k_1-2)\tau\b]y^i+\ta^2\theta^i-\f{1}{2(1-\kappa b^2)}\big\{(3\tau\kappa+2\tau-2\kappa b_k\theta^k)\a^2\\
&&+[2\kappa^2b_k\theta^k-3\tau k_3(1-\kappa b^2)+\tau\kappa(3k_2-2-3k_3b^2)+3\tau\kappa'(1-k_2b^2+k_3b^4)]\b^2\big\}b^i.
\end{eqnarray*}
When $\kappa=-k_2+k_3b^2$, it is easy to verify that
$$\kappa^2+k_2\kappa-k_3=-\kappa'(1+k_2b^2-k_3b^4),$$
and hence $\tG$ is given in the following form,
\begin{eqnarray}\label{tGt}
\tG=[2\theta+\tau\b(3k_1-2)]y^i+\ta^2\theta^i-\f{3\tau\kappa+2\tau-2\kappa b_k\theta^k}{2(1-\kappa b^2)}\ta^2b^i.
\end{eqnarray}
\end{proof}

\begin{lemma}\label{step2}
Take $\rho(b^2)=-\frac{1}{4}\int\f{k_1-k_2+k_3b^2}{1+k_2b^2-k_3b^4}\ud b^2$, then
\begin{eqnarray*}
\hG=2\hat\theta y^i+\ha^2\hat\theta^i,
\end{eqnarray*}
where $\hat\theta=\theta-\frac{1}{4}\tau[4-3(k_1+k_2-k_3b^2)]\b$. In particular, $\ha$ is dually flat on $U$.
\end{lemma}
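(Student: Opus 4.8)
The plan is to substitute the expression (\ref{tGt}) for $\tG$ into the conformal deformation rule of Lemma~\ref{beta2} and then choose $\rho$ so that the resulting $\hG$ acquires the dually flat shape (\ref{duallyflat}). First I would evaluate the two contractions appearing inside the brace of Lemma~\ref{beta2}. Since $\kappa=-k_2+k_3b^2$ gives $1-\kappa b^2=1+k_2b^2-k_3b^4$, the relations (\ref{temp4}) and (\ref{temp6}) yield
\begin{eqnarray*}
r_0+s_0=3\tau(1-\kappa b^2)\b,\qquad r^i+s^i+\f{\kappa}{1-\kappa b^2}rb^i=3\tau b^i .
\end{eqnarray*}
The second identity is the key algebraic collapse: the bare term $r^i+s^i=3\tau(1-\kappa b^2)b^i$ and the $\kappa r b^i/(1-\kappa b^2)$ correction recombine into the single clean vector $3\tau b^i$, so the whole conformal correction reduces to $\rho'\{6\tau(1-\kappa b^2)\b\,y^i-3\tau\ta^2b^i\}$.

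Adding this to (\ref{tGt}) and regrouping, I expect
\begin{eqnarray*}
\hG=\big[2\theta+(3k_1-2)\tau\b+6\rho'\tau(1-\kappa b^2)\b\big]y^i+\ta^2\theta^i-C\,\ta^2b^i,
\end{eqnarray*}
with $C=\f{3\tau\kappa+2\tau-2\kappa b_k\theta^k}{2(1-\kappa b^2)}+3\rho'\tau$. The $y^i$-coefficient is automatically a $1$-form, so it equals $2\hat\theta$ with $\hat\theta=\theta+\lambda\b$ for the scalar $\lambda:=\tfrac12[(3k_1-2)\tau+6\rho'\tau(1-\kappa b^2)]$; the substantive point is to rewrite $\ta^2\theta^i-C\ta^2b^i$ as $\ha^2\hat\theta^i$, where $\hat\theta^i$ is raised by $\ha$.

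The main obstacle is precisely this index reconciliation, because Lemmas~\ref{beta1}--\ref{beta2} raise all indices by $\a$, whereas the target form raises by $\ha$. I would dispose of it in two steps. First, since $\ha=e^{\rho}\ta$ is a point-conformal rescaling, the factors cancel: $\ha^2\hat\theta^i=e^{2\rho}\ta^2\cdot e^{-2\rho}\tilde a^{ij}\hat\theta_j=\ta^2\tilde a^{ij}\hat\theta_j$. Second, the Sherman--Morrison inverse $\tilde a^{ij}=a^{ij}+\f{\kappa}{1-\kappa b^2}b^ib^j$ of $\taij=\aij-\kappa\bi\bj$ converts $\ta$-raised indices into $\a$-raised ones, giving
\begin{eqnarray*}
\tilde a^{ij}\hat\theta_j=\theta^i+\f{\kappa b_k\theta^k+\lambda}{1-\kappa b^2}b^i .
\end{eqnarray*}
Matching the $b^i$-coefficient with $\ta^2\theta^i-C\ta^2b^i$ then reduces the entire claim to the single scalar equation $\kappa b_k\theta^k+\lambda=-C(1-\kappa b^2)$.

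Finally I would solve this scalar equation for $\rho$. Substituting $\lambda$ and $C$, the terms involving $b_k\theta^k$ cancel and, crucially, the overall factor $\tau$ also cancels, leaving $6\rho'(1-\kappa b^2)=-\tfrac32(\kappa+k_1)$, that is $\rho'=-\tfrac14\f{k_1-k_2+k_3b^2}{1+k_2b^2-k_3b^4}$. This is exactly the derivative of the prescribed $\rho$, and the disappearance of $\tau$ and $\theta$ is what certifies that $\rho$ depends on $b^2$ alone, hence is an admissible $\b$-deformation factor. Feeding this $\rho'$ back into $\lambda$ recovers $\hat\theta=\theta-\tfrac14\tau[4-3(k_1+k_2-k_3b^2)]\b$, and by the characterization (\ref{duallyflat}) the identity $\hG=2\hat\theta y^i+\ha^2\hat\theta^i$ means $\ha$ is dually flat on $U$.
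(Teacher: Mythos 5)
Your proposal is correct and follows essentially the same route as the paper: both substitute (\ref{tGt}) into Lemma~\ref{beta2}, use (\ref{temp4}) and (\ref{temp6}) to collapse the correction term, pass from $\a$-raised to $\ha$-raised indices via $\hat a^{ij}=e^{-2\rho}\left(a^{ij}+\f{\kappa}{1-\kappa b^2}b^ib^j\right)$, and then fix $\rho'=-\f{k_1+\kappa}{4(1+k_2b^2-k_3b^4)}$ to annihilate the residual $\ta^2 b^i$ term, recovering the stated $\hat\theta$. The only difference is organizational (you reduce everything to one scalar matching equation and solve for $\rho'$, while the paper defines $\hat\theta$ first and then reads off the leftover coefficient), and your observation that $\tau$ and $\theta$ drop out is exactly what makes the paper's choice of $\rho(b^2)$ admissible.
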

\begin{proof}
by (\ref{temp4}), (\ref{temp6}), (\ref{tGt}) and Lemma \ref{beta2} we have
\begin{eqnarray*}
\hG&=&\tG+\rho'\Big\{6\tau(1+k_2b^2-k_3b^4)\b y^i-\ta^2\Big(3\tau(1+k_2b^2-k_3b^4)b^i\\
&&+\f{\kappa}{1-\kappa b^2}\cdot3\tau(1+k_2b^2-k_3b^4)b^2b^i\Big)\Big\}\\
&=&\left\{2\theta+\tau[3k_1-2+6\rho'(1+k_2b^2-k_3b^4)]\b\right\}y^i+\ta^2\theta^i\\
&&-\f{1}{2(1-\kappa b^2)}\left\{3\tau\kappa+2\tau+6\tau\rho'(1+k_2b^2-k_3b^4)-2\kappa b_k\theta^k\right\}\ta^2b^i.
\end{eqnarray*}
Let
\begin{eqnarray*}
\hat\theta:=\theta+\f{1}{2}\tau[3k_1-2+6\rho'(1+k_2b^2-k_3b^4)]\b.
\end{eqnarray*}
It is easy to verify that the inverse of $(\haij)$ is given by
\begin{eqnarray}\label{haIJ}
\hat a^{ij}=e^{-2\rho}\left(a^{ij}+\f{\kappa}{1-\kappa b^2}b^ib^j\right),
\end{eqnarray}
so $\hat\theta^i:=\hat a^{ij}\hat\theta_j$ are given by
\begin{eqnarray*}
\hat\theta^i=e^{-2\rho}\left\{\theta^i+\f{1}{2(1-\kappa b^2)}\left[2\kappa b_k\theta^k+\tau(3k_1-2)
+6\tau\rho'(1+k_2b^2-k_3b^4)\right]b^i\right\}.
\end{eqnarray*}
Hence $\hG$ can be reexpressed as
\begin{eqnarray*}
\hG=2\hat\theta y^i+\ha^2\hat\theta^i-\f{3\tau e^{-2\rho}}{2(1-\kappa b^2)}\left\{k_1+\kappa+4\rho'(1+k_2b^2-k_3b^4)\right\}\ha^2b^i.
\end{eqnarray*}
Obviously, the deformation factor given in the Lemma satisfies
\begin{eqnarray}\label{rho}
\rho'=-\f{k_1+\kappa}{4(1+k_2b^2-k_3b^4)},
\end{eqnarray}
thus $\hG=2\hat\theta y^i+\ha^2\hat\theta^i$.
\end{proof}

\begin{lemma}\label{step3}
Take $\nu(b^2)=-\sqrt{1+k_2b^2-k_3b^4}e^{\rho(b^2)}$, then
\begin{eqnarray*}
\bG&=&2\bar\theta y^i+\ba^2\bar\theta^i,\\
\bbij&=&2\bar\theta_i\bbj+\bar c(x)\baij,
\end{eqnarray*}
where $\bar c(x)$ is a scalar function. In particular, $\bb$ is dually related to $\ba$.
\end{lemma}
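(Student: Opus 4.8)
The plan is to prove the two assertions separately, the first being essentially free and the second carrying all the work. For the dual flatness of $\ba$: since the third deformation leaves the spray untouched, Lemma \ref{beta3} gives $\bG=\hG$, and because $\ba=\ha$ the computation already carried out in Lemma \ref{step2} yields at once
\[
\bG=\hG=2\hat\theta y^i+\ha^2\hat\theta^i=2\hat\theta y^i+\ba^2\hat\theta^i,
\]
so $\ba$ is dually flat in the sense of (\ref{duallyflat}) with $\bar\theta:=\hat\theta$. The substance of the lemma is therefore the relation $\bbij=2\bar\theta_i\bbj+\bar c\,\baij$.

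For this I would start from Lemma \ref{beta3}, which reads $\bbij=\nu\hbij+2\nu'\bi(\rj+\sj)$, so the first real task is to produce an explicit formula for $\hbij$ --- something not needed in Lemma \ref{step2}. I would obtain it by substituting the identities (\ref{temp1})--(\ref{temp6}) into Lemma \ref{beta1} and then Lemma \ref{beta2}, using $\kappa=-k_2+k_3b^2$ (hence $\kappa'=k_3$) and the value of $\rho'$ fixed in (\ref{rho}). The decisive point is that (\ref{temp1}) together with the antisymmetric part $\sij=\theta_i\bj-\theta_j\bi$ read off from (\ref{temp2}) gives $\bij=2\theta_i\bj+(\cdots)\aij+(\cdots)\bi\bj$; the symmetric cross term $\theta_i\bj+\theta_j\bi$ carried by $b^2\rij$ in Lemma \ref{beta1} is then cancelled exactly by $\bi\sj+\bj\si$ through (\ref{temp5}), so that $\tbij$ --- and hence $\hbij$ --- retains only the asymmetric combination $2\theta_i\bj$, while (\ref{temp4}) and (\ref{temp6}) reduce all remaining pieces to multiples of $\aij$ and $\bi\bj$. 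After rewriting $\aij=\taij+\kappa\bi\bj$, replacing $\taij$ by $e^{-2\rho}\haij$, and absorbing the one-form shift recorded in $\hat\theta=\theta-\frac14\tau[4-3(k_1+k_2-k_3b^2)]\b$, I expect $\hbij$ to reduce to the clean shape
\[
\hbij=2\hat\theta_i\bj+\hat A\,\haij+\hat B\,\bi\bj
\]
for two scalar functions $\hat A,\hat B$ of $b^2$; as a check, its antisymmetric part is $\hat\theta_i\bj-\hat\theta_j\bi=\sij$, which is forced since the antisymmetrized covariant derivative of $\b$ is connection-independent.

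Feeding this into Lemma \ref{beta3}, using $\bbj=\nu\bj$, $\baij=\haij$, and $\bi(\rj+\sj)=3\tau(1+k_2b^2-k_3b^4)\bi\bj$ from (\ref{temp4}), produces
\[
\bbij=2\hat\theta_i\bbj+\nu\hat A\,\baij+\big[\nu\hat B+6\tau\nu'(1+k_2b^2-k_3b^4)\big]\bi\bj.
\]
Since $\haij$ and $\bi\bj$ are linearly independent as tensors, this is of the dually related form (\ref{duallyrelated}) with $\bar\theta=\hat\theta$ and the scalar $\bar c=\nu\hat A$ precisely when the bracketed coefficient of $\bi\bj$ vanishes, i.e. when $\nu$ satisfies the first-order equation $\nu'/\nu=-\hat B/[6\tau(1+k_2b^2-k_3b^4)]$.

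The main obstacle is therefore twofold: pushing the long but mechanical reduction through to an honest value of $\hat B$, and then checking that the prescribed $\nu=-\sqrt{1+k_2b^2-k_3b^4}\,e^{\rho}$ actually solves that ODE. For the latter one computes $\nu'/\nu=\frac{k_2-2k_3b^2}{2(1+k_2b^2-k_3b^4)}+\rho'$ and inserts (\ref{rho}); the very reason for building the factor $\sqrt{1+k_2b^2-k_3b^4}$ into $\nu$, on top of the $e^{\rho}$ inherited from the second deformation, is to cancel exactly the $\bi\bj$-coefficient generated by $\hat B$ against the derivative term $6\tau\nu'(1+k_2b^2-k_3b^4)$. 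Once that coefficient is gone, $\bar c=\nu\hat A$ is the required scalar function and $\bb$ is dually related to $\ba$, which completes the proof.
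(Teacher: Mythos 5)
Your proposal is correct and follows essentially the same route as the paper's proof: chaining Lemmas \ref{beta1}--\ref{beta3} with the identities (\ref{temp1})--(\ref{temp6}), exploiting the cancellation of the $\theta_i\bj+\theta_j\bi$ cross terms via (\ref{temp5}), absorbing the one-form shift into $\hat\theta$, and fixing $\nu$ by requiring the $\bi\bj$-coefficient to vanish, which is exactly the paper's condition $(5\kappa+k_1+2k_2)\nu+4(1+k_2b^2-k_3b^4)\nu'=0$ (your computation $\nu'/\nu=\frac{k_2-2k_3b^2}{2(1+k_2b^2-k_3b^4)}+\rho'$ together with (\ref{rho}) does verify it, since the paper's $\hat B=\frac{3}{2}\tau(5\kappa+k_1+2k_2)$ is indeed $\tau$ times a function of $b^2$, so $\tau$ cancels in your ODE). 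The only difference is bookkeeping: you carry the full $\hbij$ in the shape $2\hat\theta_i\bj+\hat A\,\haij+\hat B\,\bi\bj$, whereas the paper splits into $\hrij$ and $\hat s_{ij}$ and recombines at the end --- these are trivially equivalent.
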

\begin{proof}
Under the deformations used above, combining with (\ref{temp1}), (\ref{temp4}), (\ref{temp5}) and Lemma \ref{beta2} we can see that
\begin{eqnarray*}
\trij&=&\f{1}{1-\kappa b^2}\big\{\rij+2\kappa b_k\theta^k\bi\bj-\kappa b^2(\theta_i\bj+\theta_j\bi)
+3\tau \kappa'(1+k_2b^2-k_3b^4)b^2\bi\bj\big\}\\
&=&\theta_i\bj+\theta_j\bi+\f{1}{1-\kappa b^2}\big\{(3\tau+2\tau b^2-2b_k\theta^k)\aij\\
&&+[\tau(3k_2-2-3k_3b^2)+2\kappa b_k\theta^k+3\tau \kappa'(1+k_2b^2-k_3b^4)b^2]\bi\bj\big\}\\
&=&\theta_i\bj+\theta_j\bi+\f{1}{1-\kappa b^2}\left(3\tau+2\tau b^2-2\bk\theta^k\right)\taij+\tau(3\kappa+3k_2-2)\bi\bj,\\
\tilde s_{ij}&=&\sij=\theta_i\bj-\theta_j\bi.
\end{eqnarray*}
Similarly, by (\ref{temp4}), (\ref{rho}) and Lemma \ref{beta2} we get
\begin{eqnarray*}
\hrij&=&\trij+\f{k_1+\kappa}{2(1+k_2b^2-k_3b^4)}\Big\{6\tau(1+k_2b^2-k_3b^4)\bi\bj
-\f{1}{1-\kappa b^2}\cdot3\tau(1+k_2b^2-k_3b^4)b^2\taij\Big\}\\
&=&\theta_i\bj+\theta_j\bi+\f{e^{-2\rho}}{2(1-\kappa b^2)}\left\{6\tau+(4-3k_1)\tau b^2-3\tau\kappa b^2-4\bk\theta^k\right\}\haij
+\tau(6\kappa+3k_1+3k_2-2)\bi\bj,\\
\hat s_{ij}&=&\sij=\theta_i\bj-\theta_j\bi.
\end{eqnarray*}
If we use $\hat\theta$ instead of $\theta$ to express $\hrij$ and $\hat s_{ij}$, then
\begin{eqnarray*}
\hrij&=&\hat\theta_i\hbj+\hat\theta_j\hbi+\f{e^{-2\rho}}{2(1-\kappa b^2)}\left\{6\tau+\tau b^2-3\tau\kappa b^2-4\bk\theta^k\right\}\haij\\
&&+\frac{3}{2}\tau(5\kappa+k_1+2k_2)\hbi\hbj,\\
\hat s_{ij}&=&\hat\theta_i\hbj-\hat\theta_j\hbi,
\end{eqnarray*}
where $\hbi=\bi$ according to the definition of $\b$-deformations.

Finally, by (\ref{temp4}) and Lemma \ref{beta3} we have
\begin{eqnarray*}
\brij&=&\nu\hrij+6\tau\nu'(1+k_2b^2-k_3b^4)\bi\bj,\\
&=&\bar\theta_i\bbj+\bar\theta_j\bbi+\f{e^{-2\rho}\nu}{2(1-\kappa b^2)}\left\{6\tau+\tau b^2-3\tau \kappa b^2-4\bk\theta^k\right\}\baij\\
&&+\frac{3}{2}\tau\left\{(5\kappa+k_1+2k_2)\nu+4(1+k_2b^2-k_3b^4)\nu'\right\}\hbi\hbj,\\
\bar s_{ij}&=&\nu\sij=\nu(\hat\theta_i\hbj-\hat\theta_j\hbi)=\bar\theta_i\bbj-\bar\theta_j\bbi,
\end{eqnarray*}
where $\bar\theta:=\hat\theta$. It is easy to verify that the deformation factor in the Lemma satisfies
\begin{eqnarray}
(5\kappa+k_1+2k_2)\nu+4(1+k_2b^2-k_3b^4)\nu'=0,
\end{eqnarray}
So
$$\brij=\bar\theta_i\bbj+\bar\theta_j\bbi+\bar c(x)\baij$$
where $\bar c(x)$ is a scalar function and can be reexpressed as
\begin{eqnarray}\label{barc}
\bar c(x)=-2\bar b_k\bar\theta^k+\f{3\tau e^{-2\rho}\nu}{2(1-\kappa b^2)}\left\{2(1-\kappa b^2)+(k_1-1)b^2\right\}.
\end{eqnarray}
Combining with $\bar s_{ij}$, we have $\bbij=2\bar\theta_i\bbj+\bar c(x)\baij$.
\end{proof}

From the equality (\ref{barc}) we can see that $\bar c(x)\neq-2\bar b_k\bar\theta^k$ unless $\tau=0$. In other words, when $\tau\neq0$, $\bb$ is non-trivial~(see the statements below Theorem \ref{maincf} for the reason).

\begin{proof}[Proof of Theorem \ref{main1}]
Due to the above Lemmas, we have show that if $F=\pab$ is a non-trivial dually flat Finsler metric with dimension $n\geq3$, then the output Riemannian metric $\ba$ is dually flat and the output $1$-form $\bb$ is dually related to $\ba$.

Conversely, by (\ref{haIJ}) we can see that the norm of $\bar b$ is related to $b$ as
$$\bar b^2=\nu b_i\nu b_je^{-2\rho}\left(a^{ij}+\f{\kappa}{1-\kappa b^2}b^ib^j\right)=b^2,$$
which implies that the $\b$-deformations given above are reversible. More specifically, we have
$$\b=\nu^{-1}(\bar b^2)\bb=-\f{e^{-\rho(\bar b^2)}}{\sqrt{1+k_2\bar b^2-k_3\bar b^4}}\bb$$
and
$$\a=\sqrt{e^{-2\rho(\bar b^2)}\ba^2+\kappa(\bar b^2)\b^2}=e^{-\rho(\bar b^2)}\sqrt{\ba^2-\f{(k_2-k_3\bar
b^2)}{1+k_2\bar b^2-k_3\bar b^4}\bb^2}.$$
Denote $\eta(\bar b^2):=e^{-\rho(\bar b^2)}$. By (\ref{rho}), $\eta$ can be chose as
$$\eta(\bar b^2)=\exp{\left\{\f{1}{4}\int_0^{\bar b^2}\f{k_1-k_2+k_3t}{1+k_2t-k_3t^2}\ud t\right\}}.$$

Combining with the discussions in the proofs of Lemma \ref{step1}, Lemma \ref{step2} and Lemma \ref{step3}, it is not hard to see that if $\ba$ is dually flat and $\bb$ is dually related to $\ba$, then the output data $\a$ and $\b$ of the reverse $\b$-deformations satisfy (\ref{Gi})-(\ref{sij}) and hence $F=\pab$ is dually flat.
\end{proof}

\section{Symmetry and solutions of equation (\ref{phi})}\label{4}
In this section, we will solve the basic equation (\ref{phi}) in a nonconventional way. Firstly, let us introduce two special transformations for the function $\p$:
\begin{eqnarray*}
g_u(\p(s)):=\sqrt{1+us^2}\p\left(\f{s}{\sqrt{1+us^2}}\right),\qquad h_v(\p(s)):=\p(vs),
\end{eqnarray*}
where $u$ and $v$ are constants with $v\neq0$. The motivation of above transformations can be found in \cite{yct-dhfp}, here we just need to know that such transformations satisfy
\begin{eqnarray*}
g_{u_1}\circ g_{u_2}=g_{u_1+u_2},\qquad h_{v_1}\circ h_{v_2}=h_{v_1v_2},\qquad h_v\circ g_u=g_{v^2u}\circ h_v,
\end{eqnarray*}
and hence generate a transformation group $G$ under the above generation relationship, which is isomorphism to $\left(\mathbb R\times\mathbb R\backslash\{0\},\cdot\right)$ under the map
$\pi(g_u\circ h_v)=(u,v)$.
For the later, the operation is given by $(u_1,v_1)\cdot(u_2,v_2)=(u_1+v_1^2u_2,v_1v_2)$. In particular,
$$g_u^{-1}=g_{-u},\quad h_v^{-1}=h_{v^{-1}}.$$

The importance of the transformation group $G$ for our question is that the solution space of the 3-parameters equation (\ref{phi}) is invariant under the action of $G$ as below. The computations are  elementary and hence omitted here.
\begin{lemma}\label{l1}
If $\p(s)$ satisfies (\ref{phi}), then the function $\psi(s):=g_u(\p)$ satisfies the same kind of equation
$$s(k'_2-k'_3s^2)(\psi\psi'-s\psi'^2-s\psi\psi'')-(\psi'^2+\psi\psi'')+k'_1\psi(\psi-s\psi')=0,$$
where
$$k_1'=k_1+u,~k_2'=k_2+2u,~k_3'=k_3-k_2u-u^2.$$
Moreover, $\p(0)=\psi(0)$ and $\p'(0)=\psi'(0)$.
\end{lemma}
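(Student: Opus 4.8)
The plan is to prove this by a direct substitution that is organized so that all the algebra collapses through a single change of variable. Write $t := s/\sqrt{1+us^2}$, so that $\psi(s) = \sqrt{1+us^2}\,\phi(t)$, and set $W := 1-ut^2$. A short computation gives the two facts that drive everything: $W = (1+us^2)^{-1}$ (equivalently $1+us^2 = W^{-1}$ and $s = tW^{-1/2}$), and $\mathrm{d}t/\mathrm{d}s = (1+us^2)^{-3/2} = W^{3/2}$. I would record these first, since they let me rewrite any expression in $s$ purely in terms of $t$ and $W$.

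Next I would compute $\psi$, $\psi'$, $\psi''$ by the chain rule and assemble the three combinations appearing in the transformed equation. The point is that they simplify dramatically: I expect
\begin{gather*}
\psi(\psi - s\psi') = \phi(\phi - t\phi'),\\
\psi\psi' - s\psi'^2 - s\psi\psi'' = W^{3/2}\big(\phi\phi' - t\phi'^2 - t\phi\phi''\big),\\
\psi'^2 + \psi\psi'' = u\phi^2 + utW\,\phi\phi' + W^2\big(\phi'^2 + \phi\phi''\big),
\end{gather*}
where $\phi$ and its derivatives are evaluated at $t$. Here the auxiliary identities $us^2+1 = W^{-1}$, $u^2t^2 + uW = u$, and $sw^{-3/2} = tW$ (with $w := 1+us^2$) are what produce the collapse, so I would verify these small facts as the key intermediate steps.

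With these in hand I substitute into the transformed equation and reorganize. Writing $A := \phi\phi' - t\phi'^2 - t\phi\phi''$, $B := \phi'^2 + \phi\phi''$ and $C := \phi^2 - t\phi\phi'$, and using $A = \phi\phi' - tB$, I would express the whole left-hand side as a polynomial in the three monomials $\phi^2$, $\phi\phi'$, $B$ whose coefficients are polynomials in $t$. The plan is then to show, coefficient by coefficient, that this equals the left-hand side of the \emph{original} equation (\ref{phi}) evaluated at the point $t$; since $\phi$ solves (\ref{phi}) on its entire domain, that expression vanishes, and the transformed equation follows. The $\phi^2$ coefficient is immediate ($-u + k_1' = k_1$), while the $\phi\phi'$ and $B$ coefficients are exactly where the prescribed constants must earn their keep.

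The main obstacle is therefore bookkeeping rather than ideas: one must confirm that $k_1' = k_1+u$, $k_2' = k_2+2u$, $k_3' = k_3 - k_2u - u^2$ are precisely the choices that make the $t^2$- and $t^4$-terms cancel. Concretely, the first term carries the factor $k_2'W - k_3't^2 = k_2' - (k_2'u + k_3')t^2$, and the combination $k_2'u + k_3' = u^2 + k_3$ (which is where $k_3'$ is pinned down) is what forces the $\phi\phi'$ and $B$ coefficients to reduce to the original $k_2 - k_3t^2$ form after the stray $u$-terms from $\psi'^2+\psi\psi''$ and from $k_1'C$ are absorbed. Finally, the initial conditions are trivial: at $s=0$ one has $t=0$ and $W=1$, so $\psi(0)=\phi(0)$ and $\psi'(0)=\phi'(0)$ read off directly from the formulas for $\psi$ and $\psi'$.
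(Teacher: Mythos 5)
Your proposal is correct, and it is precisely the elementary chain-rule verification that the paper itself omits (the paper states only that ``the computations are elementary and hence omitted here''): with $t=s/\sqrt{1+us^2}$ and $W=1-ut^2=(1+us^2)^{-1}$, all three of your claimed identities for $\psi(\psi-s\psi')$, $\psi\psi'-s\psi'^2-s\psi\psi''$ and $\psi'^2+\psi\psi''$ check out, as do the auxiliary facts $\mathrm{d}t/\mathrm{d}s=W^{3/2}$, $u^2t^2+uW=u$ and the pivotal cancellation $k_2'u+k_3'=u^2+k_3$. Matching the coefficients of $\phi^2$, $\phi\phi'$ and $B=\phi'^2+\phi\phi''$ then shows the transformed left-hand side equals the left-hand side of (\ref{phi}) evaluated at $t$ (coefficients $k_1$, $k_2t-k_1t-k_3t^3$, and $-1-k_2t^2+k_3t^4$ respectively), so the lemma and the initial-condition statements follow exactly as you outline.
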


\begin{lemma}\label{l2}
If $\p(s)$ satisfies (\ref{phi}), then the function $\varphi(s):=h_v(\p)$ satisfies the same kind of equation
$$s(k''_2-k''_3s^2)(\varphi\varphi'-s\varphi'^2-s\varphi\varphi'')
-(\varphi'^2+\varphi\varphi'')+k''_1\varphi(\varphi-s\varphi')=0,$$
where
$$k_1''=v^2k_1,~k_2''=v^2k_2,~k_3''=v^4k_3.$$
Moreover, $\p(0)=\varphi(0)$ and $\p'(0)=v\varphi'(0)$.
\end{lemma}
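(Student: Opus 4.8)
The plan is a direct substitution driven by the chain rule, exploiting the fact that $h_v$ only rescales the argument. Writing $\varphi(s)=\phi(vs)$ and abbreviating $t:=vs$, I would first record the elementary consequences of differentiating $\phi(vs)$ once and twice,
$$\varphi(s)=\phi(t),\qquad \varphi'(s)=v\phi'(t),\qquad \varphi''(s)=v^2\phi''(t),$$
so that everything built from $\varphi$ can be rewritten in terms of $\phi,\phi',\phi''$ evaluated at $t$. From here the only thing to watch is the bookkeeping of the powers of $v$ when eliminating $s$ in favour of $t=vs$.

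Next I would substitute these into the left-hand side of the target equation and reduce it term by term, inserting $k_1''=v^2k_1$, $k_2''=v^2k_2$, $k_3''=v^4k_3$ and $s=t/v$. The leading coefficient $s(k_2''-k_3''s^2)$ collapses to $vt(k_2-k_3t^2)$; the inner bracket $\varphi\varphi'-s\varphi'^2-s\varphi\varphi''$ reduces to $v(\phi\phi'-t\phi'^2-t\phi\phi'')$; the middle term $-(\varphi'^2+\varphi\varphi'')$ becomes $-v^2(\phi'^2+\phi\phi'')$; and the final term $k_1''\varphi(\varphi-s\varphi')$ becomes $v^2k_1\phi(\phi-t\phi')$. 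Collecting these, the entire left-hand side factors as
$$v^2\left[t(k_2-k_3t^2)(\phi\phi'-t\phi'^2-t\phi\phi'')-(\phi'^2+\phi\phi'')+k_1\phi(\phi-t\phi')\right],$$
whose bracket is exactly equation (\ref{phi}) read at the point $t=vs$. Since $\phi$ satisfies (\ref{phi}) as an identity in its argument, the bracket vanishes, and because $v\neq0$ the whole expression vanishes; this is the claimed equation for $\varphi$. The two boundary relations then follow by evaluating at $s=0$: the identity $\varphi(0)=\phi(0)$ is immediate, and the chain-rule formula above gives $\varphi'(0)=v\phi'(0)$, a nonzero rescaling that in particular preserves the nonvanishing condition on the derivative at the origin.

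The honest assessment is that there is no genuine obstacle here, which is why the paper itself omits the computation: the statement is precisely the quasi-homogeneity of (\ref{phi}) under the scaling $s\mapsto vs$, and the proof is the short reduction sketched above. The conceptual content is already encoded in the composition laws $h_{v_1}\circ h_{v_2}=h_{v_1v_2}$ and $h_v\circ g_u=g_{v^2u}\circ h_v$, namely that rescaling $s$ by $v$ forces exactly the weights $(v^2,v^2,v^4)$ on $(k_1,k_2,k_3)$ needed to leave the equation invariant. The only care required is to avoid dropping or mismatching a power of $v$ when passing from $s$ to $t$.
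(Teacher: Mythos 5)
Your proposal is correct and is exactly the argument the paper has in mind: Lemma \ref{l2} is the elementary chain-rule substitution $\varphi(s)=\phi(vs)$, $\varphi'=v\phi'$, $\varphi''=v^2\phi''$, under which the left-hand side factors as $v^2$ times equation (\ref{phi}) evaluated at $t=vs$ — precisely the computation the paper declares ``elementary and hence omitted.'' One worthwhile by-product of your careful bookkeeping: the chain rule gives $\varphi'(0)=v\phi'(0)$, so the lemma's printed relation $\phi'(0)=v\varphi'(0)$ is evidently a typo (it would force $v^2=1$ or $\phi'(0)=0$), and your version is the correct one — in particular the nonvanishing of the derivative at the origin is still preserved, which is all that is used later.
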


Further more, there are some invariants. Denote
$$\Delta_1=k_2^2+4k_3,\qquad\Delta_2=k_2-2k_1,\qquad\Delta_3=k_1^2-k_1k_2-k_3.$$
Then we have
\begin{lemma}
$\mbox{Sgn}(\Delta_i)~(i=1,2,3)$ are all invariants under the action of $G$.
\end{lemma}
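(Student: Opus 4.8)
The plan is to verify the claim separately on the two families of generators $g_u$ and $h_v$, and then to invoke the fact that these generate $G$. Since the sign of a product of nonzero reals is governed by the signs of its factors, it suffices to track how each $\Delta_i$ transforms under a single $g_u$ and under a single $h_v$.

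First I would treat $g_u$, where by Lemma \ref{l1} the coefficients change to $k_1'=k_1+u$, $k_2'=k_2+2u$, $k_3'=k_3-k_2u-u^2$. A direct substitution will show that each $\Delta_i$ is in fact \emph{exactly} invariant, not merely its sign: all $u$-dependent terms cancel. For example, $(k_2+2u)^2+4(k_3-k_2u-u^2)=k_2^2+4k_3$ gives $\Delta_1'=\Delta_1$; likewise $(k_2+2u)-2(k_1+u)=k_2-2k_1$ gives $\Delta_2'=\Delta_2$; and expanding $(k_1+u)^2-(k_1+u)(k_2+2u)-(k_3-k_2u-u^2)$ collapses to $k_1^2-k_1k_2-k_3$, so $\Delta_3'=\Delta_3$. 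Thus $g_u$ fixes every $\Delta_i$ identically, which is stronger than what is claimed.

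Next I would treat $h_v$, where Lemma \ref{l2} gives $k_1''=v^2k_1$, $k_2''=v^2k_2$, $k_3''=v^4k_3$ with $v\neq0$. Here the $\Delta_i$ are no longer invariant, but each scales by a positive factor: substitution yields $\Delta_1''=v^4\Delta_1$, $\Delta_2''=v^2\Delta_2$ and $\Delta_3''=v^4\Delta_3$. Because $v^2>0$ and $v^4>0$, the sign of each $\Delta_i$ is preserved, and this is precisely the point at which the sign-only formulation (rather than full invariance) becomes essential.

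Finally, since every element of $G$ is a composition of maps of the form $g_u$ and $h_v$, the sign of each $\Delta_i$ is preserved at each step and hence under the whole group. There is no genuine obstacle here; the argument is entirely a bookkeeping exercise. The only point deserving attention is the dichotomy between the two generators — $g_u$ leaves each $\Delta_i$ literally fixed, whereas $h_v$ merely rescales it by a positive power of $v$ — which together explain why one can assert invariance only of the \emph{sign} in general.
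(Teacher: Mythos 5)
Your proposal is correct and follows essentially the same route as the paper: both check the claim on the generators $g_u$ and $h_v$, observing that $g_u$ leaves each $\Delta_i$ exactly fixed while $h_v$ rescales them by the positive factors $v^2$ or $v^4$, so the signs persist under arbitrary compositions. Your version merely spells out the cancellations that the paper dismisses as obvious, which is a matter of exposition rather than substance.
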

\begin{proof}
It's only need to show that $\mbox{Sgn}(\Delta_i)$~are invariant for $g_u(\p)$ and $h_v(\p)$. It is obvious, because by Lemma \ref{l1} and Lemma \ref{l2} we have $\Delta_1'=\Delta_1$, $\Delta_2'=\Delta_2$, $\Delta_3'=\Delta_3$ and $\Delta_1''=v^4\Delta_1$, $\Delta_2''=v^2\Delta_2$, $\Delta_3''=v^4\Delta_3$.
\end{proof}
Further more, $\Delta_i$ satisfy $\Delta_2^2-4\Delta_3=\Delta_1$. They will play a basic role for the further research.

Next, we will solve the equation (\ref{phi}) with the initial conditions
$$\p(0)=1,\qquad\p'(0)=\varepsilon$$
combining with the transformation group $G$. Note that for $\ab$-metrics $F=\pab$, the function $\ps$ must be positive near $s=0$ and hence we can always assume $\p(0)=1$ after necessary scaling. On the other hand, $\varepsilon\neq0$ by the assumption of Theorem \ref{maincf}.

Let $\psi(s)=g_{-k_1}(\p)$. According to Lemma \ref{l1}, the function $\psi(s)$ will satisfies the following equation
\begin{eqnarray}\label{eq2}
s\{k_2-2k_1-(k_3+k_1k_2-k_1^2)s^2\}(\psi\psi'-s\psi'^2-s\psi\psi'')
-\psi'^2+\psi\psi''=0
\end{eqnarray}
with the initial conditions
$$\psi(0)=1,\qquad\psi'(0)=\varepsilon.$$
Let $u(s)=\psi^2(s)$. It is easy to see that (\ref{eq2}) becomes
\begin{eqnarray}\label{eq3}
\{1+\Delta_2s^2+\Delta_3s^4\}u''=s\{\Delta_2+\Delta_3s^2\}u'
\end{eqnarray}
with the initial conditions
$$u(0)=1,\qquad u'(0)=2\epsilon.$$
Hence, $u'(s)$ is given by
$$u'(s)=\exp\left\{\frac{1}{2}\int\f{\Delta_2+\Delta_3s^2}{1+\Delta_2s^2+\Delta_3s^4}\ud s^2\right\}:=2\varepsilon f(s),$$
where $f(s)$ satisfying $f(0)=1$ can be expressed as elementary functions. So we have
\begin{lemma}
The solutions of equation (\ref{eq3}) with the initial conditions $u(0)=1,~u'(0)=2\epsilon$ are given by $$u(s)=1+2\epsilon\int_0^sf(\sigma)\ud\sigma,$$
where $f(s)$ satisfying $f(0)=1$ are given in the following:
\begin{enumerate}
\item when $\Delta_3=0,~\Delta_1=0$,
$$f(s)=1;$$
\item when $\Delta_3=0,~\Delta_1\neq0$,
$$f(s)=\sqrt{1+\Delta_2s^2};$$
\item when $\Delta_3\neq0,~\Delta_1>0$,
$$f(s)=\sqrt[4]{1+\Delta_2s^2+\Delta_3s^4}\left\{\frac{2+(\Delta_2+\sqrt{\Delta_1})s^2}
{2+(\Delta_2-\sqrt{\Delta_1})s^2}\right\}^\frac{\Delta_2}{4\sqrt{\Delta_1}};$$
\item when $\Delta_3\neq0,~\Delta_1=0$,
$$f(s)=\sqrt{1+\frac{\Delta_2}{2}s^2}\exp\left\{\frac{1}{2+\Delta_2s^2}-\frac{1}{2}\right\};$$
\item when $\Delta_3\neq0,~\Delta_1<0$,
$$f(s)=\sqrt[4]{1+\Delta_2s^2+\Delta_3s^4}\exp\left\{\frac{\Delta_2}{2\sqrt{-\Delta_1}}
\left[\arctan\frac{\Delta_2+2\Delta_3s^2}{\sqrt{-\Delta_1}}-\arctan\frac{\Delta_2}{\sqrt{-\Delta_1}}
\right]\right\}.$$
\end{enumerate}
\end{lemma}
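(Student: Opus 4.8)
The plan is to treat (\ref{eq3}) as exactly what it is: a first-order linear (indeed separable) ODE in the single unknown $v:=u'$. The prescribed data give $v(0)=u'(0)=2\varepsilon\neq0$, so by continuity $v\neq0$ on a neighbourhood of $s=0$; there I may divide (\ref{eq3}) by $u'$ and write
\[
\f{\mathrm{d}}{\mathrm{d}s}\ln u'=\f{s(\Delta_2+\Delta_3 s^2)}{1+\Delta_2 s^2+\Delta_3 s^4}.
\]
The substitution $t=s^2$ turns the right-hand side into $\tfrac12\,\dfrac{\Delta_2+\Delta_3 t}{1+\Delta_2 t+\Delta_3 t^2}$, which is precisely the integrand already recorded just before the statement. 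Integrating from $0$, exponentiating, and fixing the multiplicative constant by $u'(0)=2\varepsilon$ yields $u'=2\varepsilon f$ with $f(0)=1$; one further integration against $u(0)=1$ gives $u(s)=1+2\varepsilon\int_0^s f(\sigma)\ud\sigma$. Hence the only genuine content is to evaluate the antiderivative $\tfrac12\int_0^{s^2}\dfrac{\Delta_2+\Delta_3 t}{1+\Delta_2 t+\Delta_3 t^2}\ud t$ in closed form.

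To do this I would split the numerator along the derivative of the denominator $Q(t):=1+\Delta_2 t+\Delta_3 t^2$, writing $\Delta_2+\Delta_3 t=\tfrac12 Q'(t)+\tfrac{\Delta_2}{2}$. The first piece integrates universally to $\tfrac14\ln Q$, which after exponentiation supplies the common factor $\sqrt[4]{1+\Delta_2 s^2+\Delta_3 s^4}$ seen in cases (3)--(5); the second piece reduces everything to the single elementary integral $\tfrac{\Delta_2}{4}\int_0^{s^2}\dfrac{\ud t}{Q(t)}$. The form of this last integral is dictated entirely by the roots of $Q$, and since the discriminant of $Q$ is exactly $\Delta_2^2-4\Delta_3=\Delta_1$, the five cases of the statement correspond precisely to: $Q$ linear ($\Delta_3=0$), subdivided by whether $\Delta_2=0$, i.e. $\Delta_1=0$; and $Q$ genuinely quadratic ($\Delta_3\neq0$) with two distinct real roots ($\Delta_1>0$), a double root ($\Delta_1=0$), or a conjugate pair ($\Delta_1<0$).

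Case by case the standard antiderivatives then appear: a constant integrand (case 1), a single logarithm giving $\sqrt{1+\Delta_2 s^2}$ (case 2), a logarithm of the ratio of the two real factors (case 3), a rational antiderivative proportional to $(2+\Delta_2 s^2)^{-1}$ (case 4), and an $\arctan$ (case 5). In each case I would present $Q$ in the normalized factored form — for instance $2+(\Delta_2\pm\sqrt{\Delta_1})s^2$ when $\Delta_1>0$ and $1+\tfrac{\Delta_2}{2}s^2$ for the double root — so that the resulting closed form is manifestly $1$ at $s=0$, which simultaneously enforces $f(0)=1$ and pins down the constant of integration. As a final safeguard I would check that the logarithmic derivative of each displayed $f$ reproduces $\tfrac12\,\dfrac{\Delta_2+\Delta_3 s^2}{Q(s^2)}$, which both validates the formula and sidesteps any worry about having divided by $u'$.

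No step is conceptually hard — the equation is solvable by quadrature — so the ``main obstacle'' is purely organizational: matching textbook antiderivatives to the specific algebraic presentation used here, and above all tracking signs and normalizations through the degenerate double-root case $\Delta_1=0$, where the logarithmic and arctangent formulas collapse into the rational-exponential form and a sign slip is easy to commit. This is exactly why the statement isolates $\Delta_1=0$ as a case of its own, and why I would verify that case by direct differentiation rather than as a limit of the neighbouring ones.
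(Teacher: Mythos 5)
Your proposal is correct and is essentially the paper's own (largely implicit) proof: the paper likewise reduces (\ref{eq3}) to the quadrature $u'(s)=2\varepsilon\exp\left\{\frac{1}{2}\int\frac{\Delta_2+\Delta_3t}{1+\Delta_2t+\Delta_3t^2}\ud t\right\}$ with $t=s^2$ and obtains the five cases from the discriminant $\Delta_2^2-4\Delta_3=\Delta_1$ of the denominator $Q$, which is exactly your splitting into the universal factor $\sqrt[4]{Q}$ from $\frac{1}{4}\ln Q$ plus the root-dependent integral $\frac{\Delta_2}{4}\int\frac{\ud t}{Q}$. Your closing safeguard is not idle: carrying out your computation in the double-root case yields the exponent $\frac{1}{2}-\frac{1}{2+\Delta_2s^2}$, so the exponent printed in case (4) of the lemma has its sign reversed (direct differentiation of the printed $f$ gives $\frac{\Delta_2^2s^3}{(2+\Delta_2s^2)^2}$ instead of the required $\frac{\Delta_2s(4+\Delta_2s^2)}{(2+\Delta_2s^2)^2}$), which is precisely the slip you flagged there.
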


\begin{theorem}
The solutions of equation (\ref{phi}) with the initial conditions $\phi(0)=1,~\phi'(0)=\epsilon$ are given by
$$\phi(s)=\sqrt{(1+k_1s^2)\left\{1+2\epsilon\int_0^s(1+k_1\sigma^2)^{-\frac{3}{2}}
f(\frac{\sigma}{\sqrt{1+k_1\sigma^2}})\ud\sigma\right\}}.$$
\end{theorem}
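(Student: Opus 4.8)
The plan is simply to invert the chain of reductions already set up. By Lemma \ref{l1}, the substitution $\psi:=g_{-k_1}(\p)$ carries equation (\ref{phi}) into equation (\ref{eq2}), which the further substitution $u:=\psi^2$ linearizes into (\ref{eq3}); and since Lemma \ref{l1} records that $\p(0)=\psi(0)$ and $\p'(0)=\psi'(0)$, the normalized data $\p(0)=1,\ \p'(0)=\epsilon$ transport to $\psi(0)=1,\ \psi'(0)=\epsilon$, that is to $u(0)=1,\ u'(0)=2\epsilon$. Thus the preceding lemma solving (\ref{eq3}) applies directly and gives $u(s)=1+2\epsilon\int_0^s f(\sigma)\ud\sigma$, so that $\psi(s)=\sqrt{1+2\epsilon\int_0^s f(\sigma)\ud\sigma}$; the square root is legitimate in a neighborhood of $s=0$ because $\psi(0)=1>0$.

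Next I would recover $\p$ from $\psi$. Since $g_{-k_1}^{-1}=g_{k_1}$, we have $\p=g_{k_1}(\psi)$, i.e.
$$\p(s)=\sqrt{1+k_1s^2}\;\psi\!\left(\f{s}{\sqrt{1+k_1s^2}}\right)=\sqrt{(1+k_1s^2)\left\{1+2\epsilon\int_0^{s/\sqrt{1+k_1s^2}}f(\sigma)\ud\sigma\right\}}.$$
To bring the inner integral onto the interval $[0,s]$ appearing in the statement, I would substitute $\sigma=\tau/\sqrt{1+k_1\tau^2}$. A direct differentiation gives $\ud\sigma=(1+k_1\tau^2)^{-3/2}\ud\tau$, while the upper limit $\sigma=s/\sqrt{1+k_1s^2}$ corresponds to $\tau=s$, so that
$$\int_0^{s/\sqrt{1+k_1s^2}}f(\sigma)\ud\sigma=\int_0^s(1+k_1\tau^2)^{-3/2}f\!\left(\f{\tau}{\sqrt{1+k_1\tau^2}}\right)\ud\tau.$$
Substituting this back yields precisely the claimed formula for $\p(s)$.

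The whole argument is an assembly of facts already proved, so I do not anticipate a serious obstacle. The two points needing care are the verification that the group action preserves the normalized initial conditions --- which is exactly the last assertion of Lemma \ref{l1} --- and the Jacobian computation $\ud\sigma=(1+k_1\tau^2)^{-3/2}\ud\tau$, which is the only genuinely computational step and is entirely elementary. Finally, because $g_{k_1}$ is invertible with inverse $g_{-k_1}$, this construction produces all solutions with the prescribed initial data rather than merely one.
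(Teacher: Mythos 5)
Your proposal is correct and takes essentially the same route as the paper: pass to $\psi=g_{-k_1}(\phi)$, linearize via $u=\psi^2$, invoke the solution lemma for (\ref{eq3}), and recover $\phi=g_{k_1}(\psi)$. The only difference is cosmetic --- you spell out the change of variables $\sigma=\tau/\sqrt{1+k_1\tau^2}$ with Jacobian $(1+k_1\tau^2)^{-3/2}$, which the paper compresses into the phrase ``which can also be expressed as the form given in the Theorem.''
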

\begin{proof}
By assumption,
$$\psi(s)=\sqrt{u}=\sqrt{1+2\varepsilon\int_0^sf(\sigma)\ud\sigma},$$
so
\begin{eqnarray*}
\phi(s)&=&g_{k_1}(\psi)\\
&=&\sqrt{1+k_1s^2}\psi(\f{s}{\sqrt{1+k_1s^2}})\\
&=&\sqrt{(1+k_1s^2)\left(1+2\varepsilon\int_0^\frac{s}{\sqrt{1+k_1s^2}}f(\sigma)\ud\sigma\right)},
\end{eqnarray*}
which can also be expressed as the form given in the Theorem.
\end{proof}

Most of the solutions of (\ref{phi}) are non-elementary. Some elementary solutions are listed below~(except for the last two items). Notice that there is no sum of formula when the sum index $n=1$, and we rule $m!!=1$ when $m\leq0$.
\begin{itemize}
\item When $k_1=0,k_2=0,k_3=0$,
$$\ps=\sqrt{1+2\epsilon s};$$
\item When $k_1=0,k_2<0,k_3=0$,
$$\ps=\sqrt{1+\epsilon\left(s\sqrt{1+k_2s^2}+\frac{1}{\sqrt{-k_2}}\arcsin\sqrt{-k_2}s\right)};$$
\item When $k_1=0,k_2>0,k_3=0$,
$$\ps=\sqrt{1+\epsilon\left(s\sqrt{1+k_2s^2}+\frac{1}{\sqrt{k_2}}\textrm{arcsinh}\,\sqrt{k_2}s\right)};$$
\item When $k_3=0,k_1+k_2=0$,
$$\ps=\sqrt{1+2\epsilon s+k_1s^2};$$
\item When $k_1\neq0,k_2=\frac{1}{2n}k_1~(n=1,2,3,\cdots),k_3=0$,
$$\ps=\sqrt{1+k_1s^2+\epsilon s\sqrt{1+k_2s^2}\left[\f{(2n)!!}{(2n-1)!!}
-\sum_{k=1}^{n-1}\f{2(n-k)(2n-2)!!(2k-3)!!}{(2n-1)!!(2k)!!}(1+k_2s^2)^{-k}\right]};$$
\item When $k_1>0,k_2=\frac{1}{2n+1}k_1~(n=1,2,3,\cdots),k_3=0$,
\begin{eqnarray*}
\ps&=&\Bigg\{(1+k_1s^2)\left[1+\frac{(2n-1)!!}{(2n)!!}\frac{\epsilon}{\sqrt{k_2}}\arctan\sqrt{k_2}s\right]\\
&&+\epsilon s\left[\frac{(2n+1)!!}{(2n)!!}-\sum_{k=1}^{n-1}
\frac{2(n-k)(2n-1)!!(2k-2)!!}{(2n)!!(2k+1)!!}(1+k_2s^2)^{-k}\right]\Bigg\}^\frac{1}{2};
\end{eqnarray*}
\item When $k_1<0,k_2=\frac{1}{2n+1}k_1~(n=1,2,3,\cdots),k_3=0$,
\begin{eqnarray*}
\ps&=&\Bigg\{(1+k_1s^2)\left[1+\frac{(2n-1)!!}{(2n)!!}\frac{\epsilon}{\sqrt{-k_2}}
\textrm{arctanh}\,\sqrt{-k_2}s\right]\\
&&+\epsilon s\left[\frac{(2n+1)!!}{(2n)!!}-\sum_{k=1}^{n-1}
\frac{2(n-k)(2n-1)!!(2k-2)!!}{(2n)!!(2k+1)!!}(1+k_2s^2)^{-k}\right]\Bigg\}^\frac{1}{2};
\end{eqnarray*}
\item When $k_1\neq0,k_2=-\frac{1}{2n+1}k_1~(n=1,2,3,\cdots),k_3=0$,
$$\ps=\sqrt{1+k_1s^2+\epsilon s\left[\f{(2n+2)!!}{(2n+1)!!}
-\sum_{k=1}^{n}\f{2(n-k+1)(2n)!!(2k-3)!!}{(2n+1)!!(2k)!!}(1+k_2s^2)^{k}\right]};$$
\item When $k_1>0,k_2=-\frac{1}{2n}k_1~(n=1,2,3,\cdots),k_3=0$,
\begin{eqnarray*}
\ps&=&\Bigg\{(1+k_1s^2)\left[1+\frac{(2n-1)!!}{(2n)!!}\frac{\epsilon}{\sqrt{-k_2}}
\arcsin\sqrt{-k_2}s\right]\\
&&+\epsilon s\sqrt{1+k_2s^2}\left[\frac{(2n+1)!!}{(2n)!!}-\sum_{k=1}^{n-1}
\frac{2(n-k)(2n-1)!!(2k-2)!!}{(2n)!!(2k+1)!!}(1+k_2s^2)^{k}\right]\Bigg\}^\frac{1}{2};
\end{eqnarray*}
\item When $k_1<0,k_2=-\frac{1}{2n}k_1~(n=1,2,3,\cdots),k_3=0$,
\begin{eqnarray*}
\ps&=&\Bigg\{(1+k_1s^2)\left[1+\frac{(2n-1)!!}{(2n)!!}\frac{\epsilon}{\sqrt{k_2}}
\textrm{arcsinh}\,\sqrt{k_2}s\right]\\
&&+\epsilon s\sqrt{1+k_2s^2}\left[\frac{(2n+1)!!}{(2n)!!}-\sum_{k=1}^{n-1}
\frac{2(n-k)(2n-1)!!(2k-2)!!}{(2n)!!(2k+1)!!}(1+k_2s^2)^{k}\right]\Bigg\}^\frac{1}{2};
\end{eqnarray*}
\item When $k_1=0,k_2=0,k_3\neq0$,
$$\ps=\sqrt{1+2\epsilon\int_0^s\sqrt[4]{1-k_3\sigma^4}\ud\sigma};$$
\item When $k_1\neq0,k_2=0,k_3=0$,
$$\ps=\sqrt{(1+k_1s^2)\left[1+2\epsilon\int_0^s\f{e^{\frac{k_1}{2}\sigma^2}}{(1+k_1\sigma^2)^2}
\ud\sigma\right]}.$$
\end{itemize}

\section{Some explicit examples}\label{5}

We can construct some typical examples below.

\begin{example}
Take $k_1=k_2=k_3=0$ and $\varepsilon=\frac{1}{2}$, then $\ps=\sqrt{1+s}$ satisfies (\ref{phi}). By Theorem \ref{main1}, the Finsler metric
$$F=\sqrt{\a(\a+\b)}$$
is locally dually flat if and only if $\a$ is locally dually flat and $\b$ is dually related to $\a$. In particular, the following metrics
$$F=\sqrt{\frac{\sqrt{(1+\mu|x|^2)|y|^2-\mu\langle x,y\rangle^2}}{(1+\mu|x|^2)^\frac{3}{4}}\left(\frac{\sqrt{(1+\mu|x|^2)|y|^2-\mu\langle x,y\rangle^2}}{(1+\mu|x|^2)^\frac{3}{4}}+\f{\lambda\xy}{(1+\mu|x|^2)^\frac{5}{4}}\right)}$$
are dually flat.
\end{example}

\begin{example}
Take $k_1=-k_2=\kappa$, $k_3=0$, then $\ps=\sqrt{1+2\varepsilon s+\kappa s^2}$ satisfies (\ref{phi}). By Theorem \ref{main1}, the Finsler metric
$$F=\sqrt{\alpha^2+2\varepsilon\a\b+\kappa\b^2}$$
is locally dually flat if and only if
$$\a=(1-\kappa\bar b^2)^{-1}\sqrt{(1-\kappa\bar b^2)\ba^2+\kappa\bb^2},\qquad\b=-(1-\kappa\bar b^2)^{-1}\bb,$$
where $\ba$ is locally dually flat and $\bb$ is dually related to $\ba$.
\end{example}

\begin{example}
Take $k_1=k_3=0,$ $k_2=-1$ and $\varepsilon=1$, then $\ps=\sqrt{1+s\sqrt{1-s^2}+\arcsin s}$ satisfies (\ref{phi}). By Theorem \ref{main1}, the Finsler metric
$$F=\sqrt{\a^2+\sqrt{\a^2-\b^2}\b+\a^2\arcsin\f{\b}{\a}}$$
is locally dually flat if and only if
$$\a=(1-\bar b^2)^{-\frac{3}{4}}\sqrt{(1-\bar b^2)\ba^2+\bb^2},\qquad\b=-(1-\bar b^2)^{-\frac{3}{4}}\bb,$$
where $\ba$ is locally dually flat and $\bb$ is dually related to $\ba$.
\end{example}

\begin{example}
Take $k_1=k_3=0,$ $k_2=1$ and $\varepsilon=1$, then $\ps=\sqrt{1+s\sqrt{1+s^2}+\mathrm{arcsinh}\,s}$ satisfies (\ref{phi}). By Theorem \ref{main1}, the Finsler metric
$$F=\sqrt{\a^2+\sqrt{\a^2+\b^2}\b+\a^2\mathrm{arcsinh}\f{\b}{\a}}$$
is locally dually flat if and only if
$$\a=(1+\bar b^2)^{-\frac{3}{4}}\sqrt{(1+\bar b^2)\ba^2-\bb^2},\qquad\b=-(1+\bar b^2)^{-\frac{3}{4}}\bb,$$
where $\ba$ is locally dually flat and $\bb$ is dually related to $\ba$.
\end{example}

\begin{example}
Take $k_1=k_2=0,$ $k_3=\pm1$ and $\varepsilon=\frac{1}{2}$, then $\ps=\sqrt{1+\int_0^s\sqrt[4]{1\pm\sigma^4}\ud\sigma}$ satisfies (\ref{phi}). By Theorem \ref{main1}, the Finsler metric
$$F=\sqrt{1+\int_0^\frac{\b}{\a}\sqrt[4]{1\pm\sigma^4}\ud\sigma}$$
is locally dually flat if and only if
$$\a=(1\mp\bar b^4)^{-\frac{5}{8}}\sqrt{(1\mp\bar b^4)\ba^2\pm\bar b^2\bb^2},\qquad\b=-(1\mp\bar b^4)^{-\frac{5}{8}}\bb,$$
where $\ba$ is locally dually flat and $\bb$ is dually related to $\ba$.
\end{example}

\begin{example}
Take $k_2=k_3=0,$ $k_1=\pm1$ and $\varepsilon=\frac{1}{2}$, then $\ps=\sqrt{(1\pm s^2)(1+\int_0^s\f{e^{\pm\frac{\sigma^2}{2}}}{(1\pm\sigma^2)^2}\ud\sigma)}$ satisfies (\ref{phi}). By Theorem \ref{main1}, the Finsler metric
$$F=\sqrt{(\a^2\pm\b^2)\left(1+\int_0^\frac{\b}{\a}\f{e^{\pm\frac{\sigma^2}{2}}}{(1\pm\sigma^2)^2}\ud\sigma\right)}$$
is locally dually flat if and only if
$$\a=e^{\pm\frac{\bar b^2}{4}}\ba,\qquad\b=-e^{\pm\frac{\bar b^2}{4}}\bb,$$
where $\ba$ is locally dually flat and $\bb$ is dually related to $\ba$.
\end{example}

\noindent Changtao Yu\\
School of Mathematical Sciences, South China Normal
University, Guangzhou, 510631, P.R. China\\
aizhenli@gmail.com
\end{document}